\newtheorem{theorem}{Theorem}
\newtheorem{definition}{Definition}
\newtheorem{lemma}{Lemma}
\newtheorem{proposition}{Proposition}
\titleformat{\section}[display]{\normalfont\huge\bfseries\centering}{\centering\chaptertitlename\thechapter}{10pt}{\Large}
\titlespacing*{\section}{0pt}{0ex}{0ex}
\begin{document}
\title[Inverse problem for the abstract diffusion-wave equation . . .]{Inverse problem for the abstract diffusion-wave equation with Caputo fractional derivative} %%%%%%%%%%%%
\author[D.K.Durdiev]{D.K.Durdiev}
\author[H.H. Turdiev]{H.H. Turdiev}
\author[A.A. Rahmonov]{A.A. Rahmonov}
\date{\today}
\address{$ ^1$Bukhara Branch of Romanovskii Institute of Mathematics,
Uzbekistan Academy of Sciences, Bukhara, Uzbekistan}
\address{$^2$Bukhara State University, Bukhara, 705018 Uzbekistan}
\email{durdiev65@mail.ru}
\email{hturdiev@mail.ru}
\email{araxmonov@mail.ru}
\maketitle

\let\thefootnote\relax
\footnotetext{MSC2020: 34A08, 34K10, 34K29, 34K37, 35R11, 35R30} %%%%%%%%%%

\begin{abstract}
In this work, we study  the inverse problem of determining a potential coefficient in an abstract wave equation that includes a lower-order term. The equation incorporates a time-fractional derivative in the Caputo sense, as well as a fractional power of an abstract operator defined on a Hilbert space.
Using the Fourier decomposition method, we analyze the solvability of the direct problem. Leveraging the properties of the solution to the direct problem, we conduct an  examination of the inverse problem. By applying the fixed point theorem within a suitable Banach space, we derive results concerning the local existence, uniqueness, and stability of the solution.
\end{abstract} %%%%%%%%%

\bigskip

{\bf Keywords:} Caputo fractional derivative, power of operator, Mittag-Leffler function, inverse problem, integral equation, Fourier series, principle of contraction mapping

In recent years, fractional diffusion and diffusion-wave equations have garnered increasing attention not only from mathematicians but also from researchers across a wide range of scientific disciplines, including engineering, biology, physics, chemistry, biochemistry, medicine, and finance. Fractional differential equations have found applications in various areas of modern science, such as dynamic processes in porous media, diffusion transport, electrical networks, control theory of dynamic systems, viscoelasticity, and more. Time-space fractional diffusion and diffusion-wave equations are formulated by replacing the conventional time and space derivatives with their fractional counterparts \cite{forhtur1,forhtur2,forhtur3,forhtur4,forhtur5}.

In the works \cite{forhtur6,forhtur7,forhtur8,forhtur9}, numerous authors have studied the unique solvability of fractional initial-boundary value problems for diffusion-wave equations involving second-order elliptic differential operators. These investigations consider various initial and boundary conditions and employ different solving  approaches.

  In the works \cite{forhtur18,forhtur19,forhtur20,forhtur21,forhtur22}, the problem of determining the time-dependent coefficient in the fractional-order diffusion-wave equation was extensively investigated. These studies focused on equations involving a second-order elliptic differential operator under various boundary conditions. In all of these contributions, the authors established theorems guaranteeing the existence and uniqueness of solutions to the inverse problem of identifying the zero-order coefficient.

In this article, we'll consider an abstract fractional diffusion-wave equation in the Hilbert space $H$:
\begin{equation}\label{equation1}
\partial_{0+,t}^{\alpha}u(t)+A^{\beta}u(t)+q(t)u(t)=f(t),\,\, t\in (0,T),
 \end{equation}
 with conditions
\begin{equation}\label{equation2}
u(x,t)\big|_{t=0}=\varphi,\quad u_t(t)\big|_{t=0}=\psi, \end{equation}
where $\partial_{0+,t}^{\alpha}$ is the Caputo fractional derivative of order $1<\alpha < 2$ in the time variable $t$ (see definition 1, 2 in preliminaries),  $0<\beta<1,$  $f(t) \in C((0, T) ; H)$ is a given function, $\varphi,\psi $  are  known elements of $H$, $H$ is a separable Hilbert space equipped with the inner product $(\cdot, \cdot)$ and the norm $\|\cdot\|$, $A: H \rightarrow H$ is  an arbitrary self-adjoint, a densely defined,  positive, unbounded operator in $H$ with the domain of definition $D(A) \subset H$. Assume that operator $A$ has a set of positive eigenvalues $\left\{\lambda_k\right\}$ and a complete set of orthonormal eigenfunctions $\left\{e_k\right\}$. Let the sequence of eigenvalues of the operator $A$ have no finite limit point. Using renumbering of eigenvalues, we can number them in a non-decreasing manner, i.e., $0<\lambda_1 \leq \lambda_2 \leq \cdots \rightarrow+\infty$.

In what follows, we refer to the problem defined by (\ref{equation1})–(\ref{equation2}) as the direct problem.

\begin{definition}\label{definition1}
A function $u(t) \in C([0,T];D(A^{\beta}))\cap C^1((0, T];H)$ with the properties $D_t^\alpha u(t)\in C((0,T];H),$ $A^{\beta} u(t) \in C((0,T];H)$ and satisfying conditions (\ref{equation1}),(\ref{equation2}), is called the  solution of direct problem.
\end{definition}

We will prove the existence of a unique solution to the direct problem, and also obtain some results about its regularity. In the main part of the article, based on the direct problem, the following inverse problem for finding the coefficient $q(t)$ in equation (\ref{equation1}) is considered.

\textit{Inverse problem.} Given $\alpha,\,\, \beta,\,\, f(t)$ and $\varphi,\,\, \psi$, find a function $ q(t)$ satisfying the problem (\ref{equation1})-(\ref{equation2}) and the additional condition
\begin{equation}\label{equation3}
\Phi[u(t)]=\mu(t), \quad 0 \leq t \leq T,
\end{equation}
where $\mu:[0,T] \rightarrow \mathbb{R}$ is a given function, $\Phi: D(\Phi) \subset H \rightarrow \mathbb{R}$ is a known linear bounded functional.

\begin{definition}\label{definition11111}
The problem of determining the function $q(t)$ from equation (\ref{equation1}) using the additional condition (\ref{equation3}) imposed on the solution of the direct problem (\ref{equation1})-(\ref{equation2}) is called the inverse problem.
\end{definition}

Problems for equation (\ref{equation1}) have a classical character and a long history, and are currently being thoroughly studied. The main source of difficulty in the analysis of problem (\ref{equation1})-(\ref{equation2}) and in the design
of efficient solution techniques is the nonlocality of both the fractional time derivative and the fractional space operator. Problems for fractional time derivatives and fractional space operator equations have been studied by many researchers.   When $\beta=1$,  in the paper \cite{forhtur23}, the authors study a class of a fractional differential equations in Hilbert spaces the following type
$$
\partial_{0+,t}^{\alpha}u(t)+A u(t)=0, \quad t \in[0, T],
$$
where $\alpha \in(1,2)$, $A$ is linear self-adjoint positive operator. The authors introduce notions of weak and strong solutions for these equation and present conditions under which there exist solutions.
Existence results were obtained for weak and strong solutions.

In the article \cite{forhtur24},  the authors studied the Cauchy problem for a fractional evolution equation of the type
$$
\partial_{0+,t}^{\alpha} u(t)=A u(t)+f(t), \quad t \in(0, a],
$$
where $a>0$ and $\alpha>0,$  $A: D(A) \subseteq X \rightarrow X$ is a closed linear operator defined in a Banach space $X$, the function $f$ belongs to an appropriate space of $X$-valued functions. In this case, the differentiability of mild solution for the fractional-order abstract Cauchy problems was investigated.  In the first part, the existence of classical solutions of the homogeneous Cauchy problem is considered, and in the second part, the existence of classical solutions of the non-homogeneous abstract Cauchy problem is investigated.

In \cite{forhtur26}, the initial-boundary value problem for the time-fractional wave equation, as well as the associated inverse problem concerning the determination of the zero-order term, were investigated. First, the direct problem was studied under a Robin-type boundary condition. Utilizing the properties of the direct problem, the inverse problem was subsequently addressed. The local solvability of the inverse problem was established, and its stability was demonstrated. Moreover, in \cite{forhtur27}, the inverse problem related to the identification of coefficients in the abstract fractional diffusion equation was rigorously analyzed. The study established results concerning the local existence, uniqueness, and stability of the inverse problem’s solution under appropriate assumptions.

In this paper, the following assumptions are made:

(I1) $\varphi \in D\left(A^{2\beta}\right),\,\,\psi  \in D\left(A^{2\beta}\right),\,\, f \in C\left([0, T] ; D\left(A^{\beta}\right)\right)$;

(I2) $\mu(t) \in C^1[0, T]$ and satisfy the conditions
$|\mu(t)|\geq \mu_0>0$ where $\mu_0$ is given number;

(I3) $\mu(0)=\Phi[\varphi]$, $\mu'(0)=\Phi[\psi]$;

(I4) $\Phi:\left\{\Phi\left[e_k\right]\right\} \in l^2(\mathbb{N})$, where $\mathbb{N}$ denotes the set of natural numbers.

In the next section, we present essential definitions and assertions required for the formulation and proofs of the results.

\section*{Preliminaries}

\begin{definition}\label{definition1}(see \cite[pp. 69-76]{forhtur1})   The Riemann-Liouville fractional integral of order $0 <\alpha< 1$ for an integrable
function $h(t)\in AC[0,T]$ is defined by
\begin{equation*}
I^{\alpha}_{0+,t}h(t)=\frac{1}{\Gamma(\alpha)}\int\limits_{0}^t (t-\tau)^{\alpha-1} h(\tau) d\tau,\,\, t>0.
\end{equation*}
\end{definition}
\begin{definition}\label{definition2}(see \cite[pp. 69-76]{forhtur1})
 The Caputo fractional derivative of order $n-1 <\alpha< n$ of the functions $h(t)\in AC^{n}[0,T]$  is defined by
\begin{equation*}
\left(\partial_{t}^{\alpha}h\right)(t)=\frac{1}{\Gamma(n-\alpha)}  \int_0^t\frac{h^{(n)}(\tau)}{(t-\tau)^{\alpha-n+1}}d\tau,\,\, t>0.
\end{equation*}
\end{definition}

{\bf Two parameter Mittag-Leffler (M-L) function.} The two parameter M-L function $E_{\alpha,\beta}(z)$ is defined by the following series:
\begin{equation*}
E_{\alpha,\beta}(z)=\sum\limits_{k=0}^{\infty}\frac{z^k}{\Gamma(\alpha k+\beta)},
\end{equation*}
where $\alpha,\beta,z\in\mathbb{C}$ with $\mathfrak{R}(\alpha)>0$, $\mathfrak{R}(\alpha)$ denotes the real part of the complex number $\alpha$, $\Gamma(\cdot)$ is Euler's Gamma function. The Mittag-Leffler function has been extensively studied by numerous researchers, leading to various generalizations and applications. A particularly noteworthy contribution that consolidates many significant results on this function is the book by Kilbas et al (see  \cite[pp. 42-44]{forhtur1}).

The case  $\beta=1$ reduces to the Mittag-Leffler  function of single parameter, i.e.
\begin{equation*}
E_{\alpha,1}(z)=\sum\limits_{k=0}^{\infty}\frac{z^k}{\Gamma(\alpha k+1)}.
\end{equation*}

\begin{proposition}\label{proposition1}
 Let $0<\alpha<2$ and $\beta\in\mathbb{R}$ be arbitrary. We suppose that $\kappa$ is such that $\pi\alpha/2<\kappa<\min\{\pi,\pi\alpha\}$. Then there exists a constant $C=C(\alpha,\beta,\kappa)>0$ such that
\begin{equation*}
\left|E_{\alpha,\beta}(z)\right|\leq\frac{C}{1+|z|},\quad \kappa\leq|\mbox{arg}(z)|\leq\pi.
\end{equation*}
\end{proposition}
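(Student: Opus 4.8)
The plan is to deduce the bound from the behaviour of $E_{\alpha,\beta}$ away from the positive real axis, which I would obtain directly from the Hankel-type contour representation rather than by quoting the full asymptotic expansion. Recall from Kilbas et al.\ \cite{forhtur1} that for $0<\alpha<2$ and any $\beta\in\mathbb{R}$, given $\epsilon>0$ and an angle $\mu$ with $\pi\alpha/2<\mu<\min\{\pi,\pi\alpha\}$, the Mittag-Leffler function admits the representation
\[
E_{\alpha,\beta}(z)=\frac{1}{2\pi\alpha i}\int_{\gamma(\epsilon;\mu)}\frac{\zeta^{(1-\beta)/\alpha}e^{\zeta^{1/\alpha}}}{\zeta-z}\,d\zeta,\qquad |\arg z|>\mu,\ |z|>\epsilon,
\]
where the contour $\gamma(\epsilon;\mu)$ consists of the two rays $\arg\zeta=\pm\mu$, $|\zeta|\ge\epsilon$, together with the arc $|\zeta|=\epsilon$, $|\arg\zeta|\le\mu$, traversed with nondecreasing $\arg\zeta$. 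In this angular range the pole $\zeta=z$ lies outside the contour, so no residue term contributes and the integral alone represents $E_{\alpha,\beta}(z)$.

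Given the sector $\kappa\le|\arg z|\le\pi$, I would first fix $\mu\in(\pi\alpha/2,\kappa)$, which is possible precisely because $\kappa>\pi\alpha/2$, and take $\epsilon=1$; since $|\arg z|\ge\kappa>\mu$, the representation applies with no residue. The core of the argument is then to estimate the three pieces of $\gamma(1;\mu)$ for $|z|=R>2$. On each ray $\zeta=re^{\pm i\mu}$ one has $\Re(\zeta^{1/\alpha})=r^{1/\alpha}\cos(\mu/\alpha)$, and because $\mu>\pi\alpha/2$ forces $\mu/\alpha\in(\pi/2,\pi)$, the cosine is strictly negative; hence the numerator decays like $e^{-c_0 r^{1/\alpha}}$ with $c_0>0$, making the ray integrals absolutely and uniformly convergent (the cutoff at $\epsilon=1$ removes any issue from the possibly negative power $r^{(1-\beta)/\alpha}$ near the origin). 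For the denominator I would use that $z$ lies in the sector $\{|\arg z|\ge\kappa\}$ while the entire contour lies in the strictly smaller sector $\{|\arg\zeta|\le\mu\}$: elementary plane geometry then yields a uniform separation $|\zeta-z|\ge c\,|z|$ for every $\zeta\in\gamma(1;\mu)$ and every $|z|\ge2$, with $c=c(\alpha,\kappa,\mu)>0$ (one may take $c=\min\{\sin(\kappa-\mu),\sin\mu,1/2\}$). Combining the exponential decay of the numerator with $1/|\zeta-z|\le 1/(c|z|)$ gives $|E_{\alpha,\beta}(z)|\le C_1/|z|$ uniformly on $\kappa\le|\arg z|\le\pi$, $|z|>2$.

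It then remains to treat bounded $z$. Since $E_{\alpha,\beta}$ is an entire function, it is continuous and hence bounded, say by $M$, on the compact disk $\{|z|\le2\}$. Using $1/|z|\le 2/(1+|z|)$ for $|z|\ge1$ in the first regime and $M\le 3M/(1+|z|)$ for $|z|\le2$ in the second, the two estimates merge into the single bound $|E_{\alpha,\beta}(z)|\le C/(1+|z|)$ on the whole sector, with $C=\max\{2C_1,\,3M\}$, which is the claimed inequality.

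The step I expect to be the main obstacle is the uniform lower bound $|\zeta-z|\ge c\,|z|$ in the contour estimate: one must check that a single choice of $\mu$ keeps $z$ quantitatively away from the entire contour simultaneously for every $z$ in the closed sector (including $\arg z=\pm\pi$), and that the constant $c$ does not degenerate there. The naive bound $|\zeta-z|\ge|z|\sin(\arg z-\mu)$ fails as $\arg z\to\pm\pi$ because the sine vanishes, so I would instead split into the case where the perpendicular foot lands on the ray (giving $|z|\sin(\kappa-\mu)$) and the case where it does not (giving the reverse-triangle bound $|z|-\epsilon\ge|z|/2$), which restores a positive uniform constant. Everything else—the convergence of the ray integrals and the bounded-disk estimate—is routine once the representation and this separation are secured.
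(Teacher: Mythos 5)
The paper gives no proof of this proposition; it simply refers the reader to Kilbas et al.\ (pp.\ 40--45), and your argument is a correct, self-contained reconstruction of exactly that standard proof: the residue-free Hankel contour representation valid for $|\mbox{arg}(z)|>\mu$ with $\pi\alpha/2<\mu<\kappa$, exponential decay on the rays from $\cos(\mu/\alpha)<0$, the uniform separation $|\zeta-z|\geq c|z|$, and continuity of the entire function on a bounded disk. Your handling of the degenerate case $\mbox{arg}(z)\to\pm\pi$ is sound (and can even be streamlined: since $|\zeta-z|^2=|z|^2+|\zeta|^2-2|z||\zeta|\cos\theta$ with $\theta\geq\kappa-\mu$, one gets $|\zeta-z|\geq|z|\sin(\kappa-\mu)$ uniformly, the case $\cos\theta\leq 0$ giving $|\zeta-z|\geq|z|$ outright), so no gap remains.
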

For the proof, we refer to (\cite[pp. 40-45]{forhtur1}.

Let $\sigma$ be an arbitrary real number. We introduce the power of operator $A$, acting in $H$ according to the rule (see, \cite[p.69]{forhtur28},\cite[p.57]{forhtur29})
$$
A^\sigma h=\sum_{k=1}^{\infty} \lambda_k^\sigma h_k e_k,
$$
where $h_k$ is the Fourier coefficients of a function $h \in H: h_k=\left(h, e_k\right)$. The domain of this operator has the form
$$
D\left(A^\sigma\right)=\left\{h \in H: \sum_{k=1}^{\infty} \lambda_k^{2 \sigma}\left(h, e_k\right)^2<\infty\right\}.
$$
It immediately follows from this definition that $D\left(A^{\sigma_1}\right) \subseteq D\left(A^{\sigma_2}\right)$ for any $\sigma_1 \geq \sigma_2$.
On the set $D\left(A^\sigma\right)$, we define the inner product
$$
(h, g)_\sigma=\sum_{k=1}^{\infty} \lambda_k^{2 \sigma} h_k \bar{g}_k=\left(A^\sigma h, A^\sigma g\right).
$$
For elements of $D\left(A^\sigma\right)$ we introduce the norm
$$
\|h\|_{D\left(A^\sigma\right)}^2=(h, h)_\sigma=\sum_{k=1}^{\infty} \lambda_k^{2 \sigma} |h_k|^2=\left(A^\sigma h, A^\sigma h\right)=\left\|A^\sigma h\right\|^2.
$$

\begin{lemma}\label{lemma1} ([30], p.189) Suppose $b\geq 0,\,\, \alpha>0,\,\, \gamma>0,\,\, \alpha+\gamma>1$ and $a(t)$ nonnegative function locally integrable on  $0\leq t<T$ and suppose $t^{\gamma-1}u(t)$ is nonnegative and locally integrable on $0\leq t<T$ with
\begin{equation*}
u(t)\leq a(t)+b\int_{0}^t(t-s)^{\alpha-1}s^{\gamma-1}u(s)ds
\end{equation*}
a.e. in $(0,T)$; then
\begin{equation*}
u(t)\leq a(t)Z_{\alpha,\gamma}\left(\left(b\Gamma(\alpha)\right)^{\frac{1}{\alpha+\gamma-1}}t\right),
\end{equation*}
where $Z_{\alpha,\gamma}(t)=\sum\limits_{m=0}^{\infty}c_{m}t^{m(\alpha+\gamma-1)},\,\,\, c_{0}=1,\,\,\frac{c_{m+1}}{c_{m}}=\frac{\Gamma(m(\alpha+\gamma-1)+\gamma)}{\Gamma(m(\alpha+\gamma-1)+\alpha+\gamma)}
$ for $m\geq 0$. As $t\rightarrow +\infty$
$$Z_{\alpha,\gamma}(t)=O\left(t^{\frac{1}{2}\frac{\alpha+\gamma-1}{\alpha}-\gamma}
\exp\left(\frac{\alpha+\gamma-1}{\alpha}t^{\frac{\alpha+\gamma-1}{\alpha}}\right)\right).$$
\end{lemma}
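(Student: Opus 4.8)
The plan is to read the hypothesis as a fixed-point estimate for the positive integral operator
\[
(B\phi)(t) = b\int_0^t (t-s)^{\alpha-1}s^{\gamma-1}\phi(s)\,ds,
\]
so that the assumption becomes $u \le a + Bu$ pointwise a.e. Since $B$ sends nonnegative functions to nonnegative functions and is monotone (if $0\le\phi\le\varphi$ a.e.\ then $B\phi\le B\varphi$), I would iterate this inequality $n$ times to obtain $u(t)\le\sum_{k=0}^{n-1}(B^k a)(t)+(B^n u)(t)$. The whole argument then splits into identifying the terms $B^k a$ and showing that the remainder $B^n u$ vanishes as $n\to\infty$.

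The engine of the proof is the action of $B$ on a pure power. Using the Beta integral $\int_0^t(t-s)^{\alpha-1}s^{\gamma+\delta-1}\,ds = t^{\alpha+\gamma+\delta-1}\,\Gamma(\alpha)\Gamma(\gamma+\delta)/\Gamma(\alpha+\gamma+\delta)$, which converges precisely because $\alpha>0$ and $\gamma+\delta>0$, I get $(B[s^\delta])(t) = b\Gamma(\alpha)\tfrac{\Gamma(\gamma+\delta)}{\Gamma(\alpha+\gamma+\delta)}\,t^{\delta+\rho}$ with $\rho:=\alpha+\gamma-1>0$. Thus each application of $B$ raises the exponent by $\rho$ and multiplies by exactly the factor occurring in the ratio $c_{k+1}/c_k$. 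Setting $g_k(t):=(b\Gamma(\alpha))^k c_k t^{k\rho}$, a direct check gives $B[g_k]=g_{k+1}$, and when $a$ is nondecreasing the monotone pull-out $B[a\,g_k]\le a\,B[g_k]$ lets an induction conclude $(B^k a)(t)\le a(t)\,(b\Gamma(\alpha))^k c_k t^{k\rho}$ (for general $a$ one instead keeps the convolution form and pulls $a$ out only under monotonicity). Summing over $k$ and letting $n\to\infty$ yields $a(t)\sum_k (b\Gamma(\alpha))^k c_k t^{k\rho}=a(t)\,Z_{\alpha,\gamma}\big((b\Gamma(\alpha))^{1/\rho}t\big)$, since $(b\Gamma(\alpha))^k t^{k\rho}=\big((b\Gamma(\alpha))^{1/\rho}t\big)^{k\rho}$.

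I expect the main obstacle to be controlling the remainder $(B^n u)(t)$ on each subinterval $[0,T']$, $T'<T$. Because of the weight $s^{\gamma-1}$ the operator $B$ is not a genuine convolution, so the iterated kernel $k_n(t,s)$ must be estimated directly; the hard part is showing it carries a factor of the form $1/\Gamma(\text{linear in }n)$ alongside the two boundary singularities $(t-s)^{\alpha-1}$ at $s=t$ and $s^{\gamma-1}$ at $s=0$. The standing hypothesis $\alpha+\gamma>1$ is exactly what makes these singularities jointly integrable at every stage and forces the Gamma factors in $c_n$ to decay faster than geometrically, so that $\int_0^t k_n(t,s)u(s)\,ds\to0$ using only local integrability of $s^{\gamma-1}u(s)$. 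This single estimate simultaneously guarantees convergence of the series and justifies passing to the limit.

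For the growth statement I would start from $c_{m+1}/c_m=\Gamma(m\rho+\gamma)/\Gamma(m\rho+\alpha+\gamma)\sim(m\rho)^{-\alpha}$, which by Stirling's formula identifies $c_m$, up to polynomial factors, with $(m!)^{-\alpha}$ times a geometric term. Comparing the resulting power series term by term with a Mittag-Leffler function and invoking the classical Mittag-Leffler asymptotics (the kind of bound recorded in Proposition \ref{proposition1}), or equivalently performing a Laplace/saddle-point analysis whose maximal term sits at $m^\ast\sim t^{\rho/\alpha}/\rho$, produces the stated exponential order in $t^{\rho/\alpha}$. I anticipate the delicate point here to be pinning down the precise constant in the exponent, as it is governed by the exact balance between $\alpha$ and $\rho$ at the saddle point and is easy to get off by the reciprocal factor $\alpha/\rho$ versus $\rho/\alpha$; this can be checked against the exactly solvable cases (e.g.\ $\gamma=1$, where $Z_{\alpha,1}(s)=E_{\alpha,1}(s^\alpha)$).
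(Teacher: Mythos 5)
The paper does not prove this lemma at all: it is quoted from Henry [30] (the generalized Gronwall inequality of Lemma 7.1.1 and the exercise following it), so there is no in-paper argument to compare against. Your iteration scheme $u\le\sum_{k=0}^{n-1}B^k a+B^n u$ with $B\phi(t)=b\int_0^t(t-s)^{\alpha-1}s^{\gamma-1}\phi(s)\,ds$ is exactly the standard route in that reference, and your Beta-integral computation is the right engine: it correctly shows that $B$ maps $t^{k\rho}$ to $\frac{\Gamma(k\rho+\gamma)}{\Gamma(k\rho+\alpha+\gamma)}b\Gamma(\alpha)\,t^{(k+1)\rho}$ with $\rho=\alpha+\gamma-1$, which reproduces the recursion for $c_m$, and the rewriting $(b\Gamma(\alpha))^k t^{k\rho}=\bigl((b\Gamma(\alpha))^{1/\rho}t\bigr)^{k\rho}$ matches the argument of $Z_{\alpha,\gamma}$ in the statement.

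The genuine gap is the step $(B^k a)(t)\le a(t)\,(b\Gamma(\alpha))^k c_k t^{k\rho}$. As you half-acknowledge, this pull-out needs $a$ nondecreasing, and your parenthetical escape for general $a$ cannot be completed, because the conclusion in the product form $u(t)\le a(t)Z_{\alpha,\gamma}(\cdot)$ is actually \emph{false} for a general nonnegative locally integrable $a$: take $a=\mathbf{1}_{[0,\varepsilon]}$ and $u=\sum_{k\ge0}B^k a$ (which satisfies $u=a+Bu$); then for $t>\varepsilon$ one has $u(t)\ge (Ba)(t)>0$ while $a(t)Z_{\alpha,\gamma}(\cdot)=0$. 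What iteration genuinely yields for arbitrary $a$ is the resolvent form $u\le a+\sum_{n\ge1}B^n a$; the stated product form requires monotonicity of $a$ (which Henry assumes, and which holds in every application of the lemma in this paper, where $a$ is a constant or a constant multiple of $t^{\alpha}$). You should either add that hypothesis explicitly or prove the resolvent form and then specialize. Two further points are only sketched, not proved: (i) the vanishing of the remainder $B^n u\to0$, which needs the inductive kernel bound $k_n(t,s)\le\frac{(b\Gamma(\alpha))^{n}}{\Gamma(\alpha)}\frac{\Gamma(\alpha)\Gamma(\gamma)\cdots}{\Gamma((n-1)\rho+\alpha+\gamma)}(\,\cdot\,)$ obtained by integrating out the intermediate variables with the same Beta identity — the mechanism you name is correct but the estimate must actually be written down, since local integrability of $t^{\gamma-1}u$ alone is all you have; and (ii) the large-$t$ asymptotics of $Z_{\alpha,\gamma}$, where your own sanity check $\gamma=1$, $Z_{\alpha,1}(t)=E_{\alpha,1}(t^{\alpha})\sim\frac1\alpha e^{t}$, in fact disagrees with the quoted polynomial prefactor $t^{\frac12\frac{\rho}{\alpha}-\gamma}=t^{-1/2}$, so that part of the statement (which the paper never uses, since it only evaluates $Z_{\alpha,\gamma}$ at finite $T$) deserves scrutiny rather than a saddle-point derivation aimed at reproducing it.
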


\section*{Solvability of  direct problem}

\begin{theorem}\label{thm1}
For any $\varphi, \psi \in H$ and $f(t) \in C([0, T] ; H)$ problem (\ref{equation1})-(\ref{equation3}) has a unique solution and this solution has the form
$$
u(t)=\sum_{j=1}^{\infty}\Bigg[\varphi_j E_{\alpha, \alpha}\left(-\lambda_j t^\alpha\right)+\psi_j t E_{\alpha, \alpha-1}\left(-\lambda_j t^\alpha\right)
$$$$
+\int_0^t f_j(t-\xi) \xi^{\alpha-1} E_{\alpha, \alpha}\left(-\lambda_j \xi^\alpha\right) d \xi\Bigg] v_j,
$$
 where the series converges in $H, f_j(t), \varphi_j$ and $\psi_j$ are corresponding Fourier coefficients.
\end{theorem}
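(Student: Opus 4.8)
The plan is to use the Fourier (spectral) decomposition method associated with the eigensystem $\{\lambda_j, e_j\}$ of $A$. Since $A^{\beta} e_j = \lambda_j^{\beta} e_j$ and $\{e_j\}$ is a complete orthonormal basis of $H$, I would seek the solution in the form $u(t)=\sum_{j=1}^{\infty} u_j(t) e_j$ with $u_j(t)=(u(t),e_j)$, and likewise expand $\varphi=\sum_j \varphi_j e_j$, $\psi=\sum_j \psi_j e_j$, $f(t)=\sum_j f_j(t) e_j$. Taking the inner product of (\ref{equation1}) against $e_j$ (in the linear case treated here, where the potential term is absent or absorbed into the source) reduces the abstract problem to the countable family of scalar Caputo fractional ODEs
\begin{equation*}
\partial_{0+,t}^{\alpha} u_j(t)+\lambda_j^{\beta} u_j(t)=f_j(t),\qquad u_j(0)=\varphi_j,\quad u_j'(0)=\psi_j,
\end{equation*}
one for each $j\in\mathbb{N}$.

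Next I would solve each scalar problem explicitly. Applying the Laplace transform and using the rule $\mathcal{L}[\partial_{0+,t}^{\alpha} u_j](s)=s^{\alpha}\widehat{u}_j(s)-s^{\alpha-1}\varphi_j-s^{\alpha-2}\psi_j$, valid for $1<\alpha<2$, I obtain
\begin{equation*}
\widehat{u}_j(s)=\frac{s^{\alpha-1}}{s^{\alpha}+\lambda_j^{\beta}}\,\varphi_j+\frac{s^{\alpha-2}}{s^{\alpha}+\lambda_j^{\beta}}\,\psi_j+\frac{1}{s^{\alpha}+\lambda_j^{\beta}}\,\widehat{f}_j(s).
\end{equation*}
Inverting termwise with the standard transform pairs $\mathcal{L}[t^{\gamma-1}E_{\alpha,\gamma}(-\lambda t^{\alpha})](s)=s^{\alpha-\gamma}/(s^{\alpha}+\lambda)$ then yields the closed form for $u_j(t)$ in terms of Mittag-Leffler functions, while the convolution theorem produces the Duhamel integral for the source term. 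Summing $u_j(t)e_j$ over $j$ gives the series in the statement.

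The main work, and the principal obstacle, is to show that this formal series genuinely defines a solution in the sense of Definition \ref{definition1}, i.e. to establish convergence in all the required norms. Here the decisive tool is the bound of Proposition \ref{proposition1}, which gives $|E_{\alpha,\gamma}(-\lambda_j^{\beta} t^{\alpha})|\le C/(1+\lambda_j^{\beta} t^{\alpha})$ for the relevant parameters. To verify $A^{\beta}u(t)\in H$ I would estimate $\sum_j \lambda_j^{2\beta}|u_j(t)|^2$; for the initial-data terms the factor $\lambda_j^{\beta}|E_{\alpha,\gamma}(-\lambda_j^{\beta} t^{\alpha})|\le C\lambda_j^{\beta}/(1+\lambda_j^{\beta} t^{\alpha})$ is uniformly bounded on any interval $t\ge\delta>0$ and, combined with $\varphi,\psi\in D(A^{2\beta})$ from assumption (I1), makes the series converge; the Duhamel term is handled by Young's convolution inequality together with $f\in C([0,T];D(A^{\beta}))$. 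The same estimates, refined to control the differences $u(t)-u(t')$, give continuity of $u$ in $D(A^{\beta})$ on $[0,T]$ and of $A^{\beta}u$, $\partial_{0+,t}^{\alpha}u$ and $u'$ on $(0,T]$; the apparent singularity at $t=0$ in the highest-order terms is absorbed by the $1/(1+\lambda_j^{\beta} t^{\alpha})$ decay. The initial conditions $u(0)=\varphi$, $u'(0)=\psi$ are checked by termwise evaluation and differentiation at $t=0$, using $E_{\alpha,\gamma}(0)=1/\Gamma(\gamma)$ and the factor $t$ carried by the second term, with the limits justified by $\varphi,\psi\in D(A^{\beta})$.

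Finally, uniqueness follows from the same spectral reduction: if $u^{(1)},u^{(2)}$ are two solutions, their difference $w=u^{(1)}-u^{(2)}$ satisfies the homogeneous equation with $\varphi=\psi=0$ and $f\equiv 0$, so each Fourier coefficient $w_j$ solves the homogeneous scalar problem with vanishing initial data; by uniqueness for scalar Caputo fractional ODEs (equivalently $\widehat{w}_j\equiv 0$) every $w_j\equiv 0$, hence $w\equiv 0$ in $H$. If desired, the same conclusion can be reached through the Gronwall-type estimate of Lemma \ref{lemma1}. I expect the convergence and regularity estimates near $t=0$, rather than the representation formula or the uniqueness argument, to be the technically demanding part.
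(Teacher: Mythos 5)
Your spectral reduction and Laplace-transform computation reproduce, almost verbatim, what the paper does in Lemma \ref{lemma3} for the auxiliary problem (\ref{equation11}): expand in the eigenbasis, solve the decoupled scalar fractional ODEs via the transform pair $\mathcal{L}[t^{\gamma-1}E_{\alpha,\gamma}(-\lambda t^{\alpha})](s)=s^{\alpha-\gamma}/(s^{\alpha}+\lambda)$, control the resulting series with Proposition \ref{proposition1}, and prove uniqueness by projecting the homogeneous problem onto each $e_k$. Two remarks on that part: your inversion correctly yields $\varphi_j E_{\alpha,1}(-\lambda_j^{\beta}t^{\alpha})+\psi_j\, t\, E_{\alpha,2}(-\lambda_j^{\beta}t^{\alpha})$ for the initial-data terms, which is what the conditions $u_j(0)=\varphi_j$, $u_j'(0)=\psi_j$ force (since $E_{\alpha,\alpha}(0)=1/\Gamma(\alpha)\neq 1$, the indices $E_{\alpha,\alpha}$, $E_{\alpha,\alpha-1}$ and the missing exponent $\beta$ in the stated formula are evidently misprints); and you are right that the hypotheses ``$\varphi,\psi\in H$, $f\in C([0,T];H)$'' are too weak to produce a solution in the sense of Definition \ref{definition1} --- the paper itself quietly upgrades to (I1) in Theorems \ref{theorem222}--\ref{theorem44}, exactly as you do.

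The genuine gap is the potential term $q(t)u(t)$, which is present in equation (\ref{equation1}) but which you set aside with ``absent or absorbed into the source.'' It cannot simply be absorbed: projecting gives $\partial_{0+,t}^{\alpha}u_j+\lambda_j^{\beta}u_j+q(t)u_j=f_j$, and because $q$ depends on $t$ this has no Mittag--Leffler closed form; treating $f_j-q\,u_j$ as the source turns your representation into an \emph{implicit} integral equation of the form $u=Z_1\varphi+Z_2\psi+\mathcal{G}(f)-\mathcal{G}(q)(u)$, whose solvability is not automatic. This is precisely where the paper invests most of its effort: it splits the problem into (\ref{equation4}) and (\ref{equation5}), reduces the initial-data part to a zero-data source problem via the substitution $v_2=W+\varphi+t\psi$ (which is why it needs $\varphi,\psi\in D(A^{2\beta})$ rather than $D(A^{\beta})$), and solves the fixed-point equation by showing that the iterates $\mathcal{O}^n$ contract, using $\left\|(\mathcal{G}(q))^n\right\|\leq (CT^{\alpha}\|q\|_{C[0,T]}\Gamma(\alpha))^n/\Gamma(n\alpha+1)\rightarrow 0$. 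Without an argument of this type --- or, equivalently, a Volterra/Gronwall iteration applied to each scalar equation --- your proof establishes the theorem only in the case $q\equiv 0$.
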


To solve the direct problem, we divide it into two
auxiliary problems:
\begin{equation}\label{equation4}
\left\{\begin{array}{l}
\partial_{0+,t}^{\alpha}v_1(t)+A^{\beta}v_1(t)+q(t)v_1(t)=f(t), \quad 0<t \leq T, \\
v_1(t)\big|_{t=0}=0,\quad v_{1t}(t)\big|_{t=0}=0,
\end{array}\right.
\end{equation}
and
\begin{equation}\label{equation5}
\left\{\begin{array}{l}
\partial_{0+,t}^{\alpha}v_2(t)+A^{\beta}v_2(t)+q(t)v_2(t)=0, \quad 0<t \leq T, \\
v_2(t)\big|_{t=0}=\varphi,\quad v_{2t}(t)\big|_{t=0}=\psi.
\end{array}\right.
\end{equation}

First of all, we will investigate problem (\ref{equation4}). For this, we use the method given in \cite{forhtur31}. We define the operator valued function $Y(t)$ by
\begin{equation}\label{equation6}
Y(t)h=\sum_{k=1}^{\infty}\left(h, e_k\right) t^{\alpha-1} E_{\alpha, \alpha}\left(-\lambda_k^{\beta} t^\alpha\right) e_k, \quad h \in H,\,\, t>0,
\end{equation}
and the operator $A^{\beta}Y(t)$ is as follows
\begin{equation}\label{equation7}
A^{\beta}Y(t)h=\sum_{k=1}^{\infty}\lambda_k^{\beta}\left(h, e_k\right) t^{\alpha-1} E_{\alpha, \alpha}\left(-\lambda_k^{\beta} t^\alpha\right) e_k, \quad h \in H,\,\, t>0.
\end{equation}

From the estimate of the Mittag-Leffler function given in Proposition \ref{proposition1} and (\ref{equation7}) we obtain
\begin{equation*}
\|A^{\beta}Y(t)h\|^2=\sum_{k=1}^{\infty}\left|\lambda_k^{\beta}\left(h, e_k\right) t^{\alpha-1} E_{\alpha, \alpha}\left(-\lambda_k^{\beta} t^\alpha\right)\right|^2
\end{equation*}
\begin{equation*}
\leq C^2 \sum_{k=1}^{\infty}\left|t^{\alpha -1} \lambda_k^{\beta}\left(h, e_k\right)\right|^2=C^2 t^{2(\alpha -1)} \sum_{k=1}^{\infty}\left|\lambda_k^{\beta}\left(h, e_k\right)\right|^2=C^2 t^{2(\alpha -1)} \left\|h\right\|_{D(A^\beta)}^2.
\end{equation*}
Thus, we have
\begin{equation}\label{equation8}
\|A^{\beta}Y(t)h\|\leq C t^{\alpha -1} \left\|h\right\|_{D(A^\beta)}, \,\, h\in H, \,\,t\geq 0.
\end{equation}

For problem (\ref{equation4}) we have the following statement.
\begin{theorem}\label{theorem222}
Let $q(t) \in C[0, T]$ and $f(t) \in C\left([0, T] ; D\left(A^{\beta}\right)\right)$ for some $\beta \in(0,1)$. Then problem (\ref{equation4}) has a unique solution and it satisfies the following integral equation
\begin{equation}\label{equation9}
v_1(t)=\int_0^t Y(t-s) f(s) d s-\int_0^t Y(t-s) q(s) v_1(s) d s.
\end{equation}
Moreover, there is a constant $c>0$ such that the following coercive type inequality holds:
\begin{equation}\label{equation10}
\left\|\partial_{0+,t}^{\alpha} v_1\right\|_{C([0,T];H)}+\|A^{\beta} v_1\|_{C([0,T];H)} \leq C\|f\|_{C\left([0,T];D\left(A^{\beta}\right)\right)}.
\end{equation}
\end{theorem}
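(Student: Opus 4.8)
The plan is to reduce problem (\ref{equation4}) to the Volterra integral equation (\ref{equation9}) by the Fourier method, to solve that equation by successive approximation in the space $X=C([0,T];D(A^{\beta}))$, and finally to read off the coercive bound (\ref{equation10}) from the generalized Gronwall inequality of Lemma \ref{lemma1}. First I would expand in the eigenbasis $\{e_k\}$: writing $v_{1,k}(t)=(v_1(t),e_k)$ and treating $q(t)v_1(t)$ as part of the inhomogeneity $g(t)=f(t)-q(t)v_1(t)$, each Fourier coefficient solves the scalar Cauchy problem $\partial_{0+,t}^{\alpha}v_{1,k}+\lambda_k^{\beta}v_{1,k}=g_k$, $v_{1,k}(0)=v_{1,k}'(0)=0$. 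Its variation-of-parameters solution $v_{1,k}(t)=\int_0^t(t-s)^{\alpha-1}E_{\alpha,\alpha}(-\lambda_k^{\beta}(t-s)^{\alpha})g_k(s)\,ds$, summed against $e_k$, is precisely the action of the operator $Y(t)$ of (\ref{equation6}), so that $v_1(t)=\int_0^tY(t-s)g(s)\,ds$, which is (\ref{equation9}). Since $\alpha\in(1,2)$, the factor $(t-s)^{\alpha-1}$ vanishes at $s=t$, the Bochner integrals are nonsingular, and the conditions $v_1(0)=v_{1t}(0)=0$ follow at once.

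Next I would prove existence and uniqueness in $X$. Setting $(Kv)(t)=\int_0^tY(t-s)f(s)\,ds-\int_0^tY(t-s)q(s)v(s)\,ds$ and $M:=\|q\|_{C[0,T]}$, the bound (\ref{equation8}) gives $\|A^{\beta}(Kv-Kw)(t)\|\le CM\int_0^t(t-s)^{\alpha-1}\|A^{\beta}(v-w)(s)\|\,ds$. Iterating $K$ and using the Beta-function identity $\int_0^t(t-s)^{\alpha-1}s^{n\alpha}\,ds=t^{(n+1)\alpha}\Gamma(\alpha)\Gamma(n\alpha+1)/\Gamma((n+1)\alpha+1)$, I would bound the successive increments by the terms of the convergent series $\sum_n(CM\Gamma(\alpha)T^{\alpha})^n/\Gamma(n\alpha+1)=E_{\alpha,1}(CM\Gamma(\alpha)T^{\alpha})$; hence the Picard iterates converge in $X$ to a fixed point of $K$, and uniqueness follows from Lemma \ref{lemma1} applied to $\|A^{\beta}(v_1-\tilde v_1)\|$ with $a\equiv0$.

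For the coercive estimate I would apply $A^{\beta}$ to (\ref{equation9}) and again use (\ref{equation8}) to obtain $\|A^{\beta}v_1(t)\|\le a(t)+CM\int_0^t(t-s)^{\alpha-1}\|A^{\beta}v_1(s)\|\,ds$, where $a(t)=C\int_0^t(t-s)^{\alpha-1}\|f(s)\|_{D(A^{\beta})}\,ds\le C\alpha^{-1}T^{\alpha}\|f\|_{C([0,T];D(A^{\beta}))}$. Lemma \ref{lemma1} with $\gamma=1$ (so that $\alpha+\gamma>1$) then yields $\|A^{\beta}v_1\|_{C([0,T];H)}\le C\|f\|_{C([0,T];D(A^{\beta}))}$, since $Z_{\alpha,1}$ is bounded on $[0,T]$. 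Because $\|h\|\le\lambda_1^{-\beta}\|A^{\beta}h\|$, this simultaneously controls $\|v_1\|$ and $\|f\|_H$, so rewriting the equation as $\partial_{0+,t}^{\alpha}v_1=f-A^{\beta}v_1-qv_1$ and estimating the right-hand side supplies the remaining $\|\partial_{0+,t}^{\alpha}v_1\|_{C([0,T];H)}$ term and completes (\ref{equation10}).

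The hard part will be the regularity bookkeeping required to upgrade the mild solution furnished by (\ref{equation9}) to a genuine solution: verifying $A^{\beta}v_1\in C((0,T];H)$ and $\partial_{0+,t}^{\alpha}v_1\in C((0,T];H)$, and justifying the interchange of $A^{\beta}$ and $\partial_{0+,t}^{\alpha}$ with the series and the Bochner integral. These rest on the uniform decay $|E_{\alpha,\alpha}(-\lambda_k^{\beta}t^{\alpha})|\le C/(1+\lambda_k^{\beta}t^{\alpha})$ from Proposition \ref{proposition1}, which furnishes the dominated-convergence majorants; the most delicate point is the continuity of $A^{\beta}v_1(t)$ as $t\to0^+$, where one must use the continuity of $s\mapsto f(s)$ in $D(A^{\beta})$ and not merely its boundedness.
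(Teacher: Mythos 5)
Your proposal is correct and follows essentially the same route as the paper: reduce (\ref{equation4}) via the eigenfunction expansion of the auxiliary linear problem to the Volterra equation (\ref{equation9}), solve it by iteration using the estimate (\ref{equation8}) and the $\Gamma(n\alpha+1)$ decay of the iterated kernels, and then recover $\partial_{0+,t}^{\alpha}v_1$ from the equation itself. The only cosmetic difference is that you obtain the a priori bound from the generalized Gronwall inequality of Lemma \ref{lemma1}, whereas the paper sums the Neumann series directly to the Mittag--Leffler bound (\ref{equation23}); both yield the same $E_{\alpha,1}$-type constant.
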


\begin{proof}

First of all, let's consider the auxiliary abstract problem. Let's consider the following  Cauchy problem
\begin{equation}\label{equation11}
\left\{\begin{array}{l}
\partial_{0+,t}^{\alpha}w(t)+A^{\beta}w(t)=F(t), \quad 0<t \leq T, \\
w(t)\big|_{t=0}=0,\quad w_{t}(t)\big|_{t=0}=0.
\end{array}\right.
\end{equation}

For the solution of problem (\ref{equation11}), the following lemma holds.
\begin{lemma}\label{lemma3}
Let $F \in C\left([0,T] ; D\left(A^{\beta}\right)\right),\,\, \beta\in (0,1)$. Then abstract Cauchy problem (\ref{equation11}) has a unique solution and this solution has the representation
\begin{equation}\label{equation12}
w(t)=\int_0^t Y(t-s) F(s) d s.
\end{equation}
Moreover, the solution of the abstract Cauchy problem (\ref{equation11}) satisfies the following inequality:
\begin{equation}\label{equation13}
\|w\|_{C([0, T] ; D(A^{\beta}))} \leq c\|F\|_{C\left([0, T] ; D\left(A^{\beta}\right)\right)},
\end{equation}
where $c=c (\alpha, T) >0$ is a constant.
\end{lemma}

\begin{proof}
Let us assume that a solution to problem (\ref{equation11}) exists.  Then, due to the completeness of the system of eigenfunctions ${e_k}$ of the operator $A$, this solution will have the form
 \begin{equation}\label{equation14}
w(t)=\sum\limits_{k=1}^{\infty}T_k(t)e_k,
\end{equation}
where $T_k(t)$ are unknown coefficients.
If we multiply both sides of this equality scalarly by ${e_k}$, then from the orthonormality of the system of eigenfunctions ${e_k}$ we obtain the equalities $T_k(t) = (w(t),e_k)$. We substitute (\ref{equation14}) to problem  (\ref{equation11}) and we obtain the following problem:
\begin{equation}\label{equation15}
\left\{\begin{array}{l}
\partial_{0+,t}^{\alpha}T_k(t)+\lambda_k^{\beta}T_k(t)=F_k(t),\,\, 0<t \leq T, \\
T_k(t)\big|_{t=0}=0,\quad T_k'(t)\big|_{t=0}=0,
\end{array}\right.
\end{equation}
where $F_k(t)$ are the Fourier coefficients of the functions $F(t)$. The theory of fractional ordinary differential equations \cite{forhtur1} gives a unique function $T_k(t)$ satisfying problem (\ref{equation15}).  By separating the variables and Laplace transform of the Mittag–Leffler function, the formal solution of the problem (\ref{equation15}) can be expressed as:
\begin{equation}\label{equation16} w(t)=\sum_{k=1}^{\infty}\left[\int\limits_0^t(t-s)^{\alpha-1} E_{\alpha,\alpha}\left(-\lambda_k^{\beta}(t-s)^\alpha\right)\left(F(s),e_k\right) d s\right] e_k.
\end{equation}
Then it follows that for $F \in C\left([0, T] ; D\left(A^{\beta}\right)\right)$, we have
$$
A^{\beta} w(t)=\int\limits_0^t A^{\beta} Y(t-s) F(s) d s.
$$
Furthermore, for $\beta \in(0,1)$, by  Proposition \ref{proposition1}, we obtain
$$
\|A^{\beta} w\| \leq \int_0^t\|A^{\beta} Y(t-s) F(s)\| d s
\leq C \int_0^t(t-s)^{\alpha -1}\|F(s)\|_{D\left(A^{\beta}\right)} d s
$$$$
\leq C \frac{t^{\alpha }}{\alpha } \max _{0 \leq s \leq t}\|F(s)\|_{D\left(A^{\beta}\right)}.
$$
Therefore we have
$$
\|A^{\beta} w\|_{C([0, T] ; H)} \leq C_1 T^{\alpha}\|F\|_{C\left([0, T] ; D\left(A^{\beta}\right)\right)}
$$
and in particular $w(t) \in C([0, T] ; D(A^{\beta}))$.

Furthermore, by the original equation $\partial_{0+,t}^{\alpha} w(t)=-A^{\beta} w(t)+F(t), t>0$, we have $\partial_{0+,t}^{\alpha} w(t) \in$ $C([0, T] ; H)$ and
$$
\left\|\partial_{0+,t}^{\alpha} w\right\|_{C([0, T] ; H)} \leq C_2\|F\|_{C\left([0, T] ; D\left(A^{\beta}\right)\right)}.
$$

Thus, we have completed the rationale that (\ref{equation12}) is a solution to problem (\ref{equation11}). Let's move on to the proof of the uniqueness of the solution to problem (\ref{equation11}). Suppose that problem (\ref{equation11}) has two solutions $w_1(t)$ and $w_2(t)$. Our goal is to prove that $w(t)=w_1(t)-$ $w_2(t) \equiv 0$. Since the problem is linear, for $w(t)$, then, we have the following homogeneous problem:
\begin{equation}\label{equation17}
\left\{\begin{array}{l}
\partial_{0+,t}^{\alpha}w(t)+A^{\beta}w(t)=0, \quad 0<t \leq T, \\
w(t)\big|_{t=0}=0,\quad w_{t}(t)\big|_{t=0}=0.
\end{array}\right.
\end{equation}

Let $w_k(t)=\left(w(t), e_k\right)$. From (\ref{equation17}) it follows that for any $k \in \mathbb{N}$ :
$$
\partial_{0+,t}^{\alpha} w_k(t)=\left(\partial_{0+,t}^{\alpha} w(t), e_k\right)=-\left(A^{\beta} w(t), e_k\right)=-\left(w(t), A^{\beta} e_k\right)=-\lambda_k^{\beta} w_k(t) .
$$

Thus, we have obtained the following Cauchy problem for $w_k(t)$ :
\begin{equation*}
\left\{\begin{array}{l}
\partial_{0+,t}^{\alpha}w_k(t)+\lambda_k^{\beta}w_k(t)=0,\,\, 0<t \leq T,\\
w_k(t)\big|_{t=0}=0,\quad w_k'(t)\big|_{t=0}=0.
\end{array}\right.
\end{equation*}
This problem has a unique solution (see \cite[p.231]{forhtur1}). Therefore, we have $w_k(t)=0$ for $t \geq 0$ and for all $k \in \mathbb{N}$. Then by Parseval's equality we obtain $w(t)=0$ for all $t \geq 0$. The Lemma \ref{lemma3} is proved.
\end{proof}

Here we define the map $\mathcal{G}: C\left([0, T] ; D\left(A^{\beta}\right)\right) \rightarrow C\left([0, T] ; D\left(A^{\beta}\right)\right)$ by
\begin{equation}\label{equation18}
\mathcal{G}(F)(t)=\int_0^t Y(t-s) F(s) d s,
\end{equation}
where $F(t) \in C([0, T] ; D(A^{\beta})).$\\
Therefore, according to lemma \ref{lemma3}, we have
$$
\|\mathcal{G}(F)\|_{C([0, T] ; D(A^{\beta}))} \leq c\|F\|_{C\left([0, T] ; D\left(A^{\beta}\right)\right)}.
$$

Now, let us turn to prove theorem \ref{theorem222}.
Let us write the Cauchy problem (\ref{equation4}) as follows
\begin{equation}\label{equation19}
\left\{\begin{array}{l}
\partial_{0+,t}^{\alpha}v_1(t)+A^{\beta}v_1(t)=f(t)-q(t)v_1(t), \quad 0<t \leq T, \\
v_1(t)\big|_{t=0}=0,\quad v_{1t}(t)\big|_{t=0}=0.
\end{array}\right.
\end{equation}

If we take $F (t) =f (t) -q (t) v_1 (t) $, then according to the solution (\ref{equation12}) of problem (\ref{equation11}), we obtain an integral equation equivalent to the problem (\ref{equation19}) with respect to $v_1$:
\begin{equation}\label{equation20}
v_1(t)=\int_0^t Y(t-s) f(s) d s-\int_0^t Y(t-s) q(s) v_1(s) d s.
\end{equation}

For integral equation (\ref{equation20}), we apply the theorem about a fixed point. So we will look for a fixed point of the operator $\mathcal{O}: C([0, T] ; D(A^{\beta})) \rightarrow C([0, T] ; D(A^{\beta}))$ defined by
$$
\mathcal{O}(v_1)(t)=-(\mathcal{G}(q))(v_1)(t)+\mathcal{G}(f)(t),\,\, t\in (0,T),
$$
for $v_1 \in C([0, T] ; D(A^{\beta}))$.

We apply the operator $\mathcal{O}$ to $v_1(t)$ twice in a row
$$
\mathcal{O}^2(v_1)(t)=(\mathcal{G}(q))^2(v_1)(t)-(\mathcal{G}(q))\mathcal{G}(f)(t)+\mathcal{G}(f)(t),\,\, t\in (0,T).
$$
By induction, we have
$$
\mathcal{O}^n(v_1)(t)=(-1)^n(\mathcal{G}(q))^n(v_1)(t)+\sum\limits_{i=0}^{n-1}(-1)^i(\mathcal{G}(q))^i\mathcal{G}(f)(t),\,\, t\in (0,T).
$$
Here we denote $(\mathcal{G}(q)(t))^0=I$ is identity operator.\\
By  $q(t) \in C[0, T]$   and   $v \in C([0, T] ; D(A^{\beta}))$, we known that   $q v \in C([0, T] ; D(A^{\beta}))$   and
\begin{equation}\label{equation21}
 \|q(s) v(s)\|_{D(A^{\beta})} \leq \|q\|_{C[0,T]}\|v(s)\|_{D(A^{\beta})}.
\end{equation}

By (\ref{equation8}), for $v_1 \in C([0, T] ; D(A^\beta))$ and all $t \in[0, T]$, we have
$$
\|(\mathcal{G}(q))(v_1)(t)\|_{D(A^{\beta})} =\left\|\int_0^t Y(t-s) q(s) A^{\beta} v_1(s) d s\right\|
$$$$
\leq \|q\|_{C[0,T]} \int_0^t(t-s)^{\alpha-1}\|v_1(s)\|_{D(A^{\beta})} d s \leq C T^{\alpha} \|q\|_{C[0,T]}\|v_1\|_{\mathcal{C}([0,T];D(A^\beta))}.
$$
Then,  for every $v_1 \in C([0, T] ; D(A^{\beta}))$, we have $(\mathcal{G}(q))(v_1) \in C([0, T] ; D(A^{\beta}))$ and the estimate
$$
\|(\mathcal{G}(q))(v_1)(t)\|_{C([0, T] ; D(A^{\beta}))} \leq C T^{\alpha} \|q\|_{C[0,T]}\|v_1\|_{\mathcal{C}([0,T];D(A^\beta))}.
$$
Thus we can see that $(\mathcal{G}(q))$ maps $C([0, T] ; D(A^{\beta}))$ into itself. Combining $\mathcal{G} f \in C([0, T] ; D(A^{\beta}))$, we get that the operator $\mathcal{G}$ also maps $C([0, T] ; D(A^{\beta}))$ into itself.

Repeating the same calculations for $v_1\in C([0,T];D (A^{\beta}))$, we obtain
$$
\left\|(\mathcal{G}(q))^2(v_1)(t)\right\|_{D(A^{\beta})} \leq C \int_0^t(t-s)^{\alpha -1}\|(\mathcal{G}(q(s)))(v_1)(s)\|_{D(A^{\beta})} d s
$$$$
\leq C^2 \int_0^t(t-s)^{\alpha  -1}\left(\int_0^s(s-\tau)^{\alpha  -1}\|v_1(\tau)\|_{D(A^{\beta})} d \tau\right) d s
$$$$
=C^2 \int_0^t\left(\int_\tau^t(t-s)^{\alpha -1}(s-\tau)^{\alpha -1} d s\right)\|v_1(\tau)\|_{D(A^{\beta})} d \tau
$$$$
=\frac{(C\Gamma(\alpha ))^2}{\Gamma(2 \alpha )} \int_0^t(t-\tau)^{2 \alpha -1}\|v_1(\tau)\|_{D(A^{\beta})} d \tau
$$
or
$$
\left\|(\mathcal{G}(q))^2(v)(t)\right\|_{D(A^{\beta})} \leq \frac{(CT^{\alpha}\|q\|_{C[0,T]}\Gamma(\alpha ))^2}{\Gamma(2 \alpha +1)}\|v_1\|_{C([0,T]; D(A^{\beta}))}.
$$
By induction, for all $v_1 \in C([0,T];D(A^{\beta}))$, we have
\begin{equation}\label{equation22}
\left\|\mathcal{G}^n w(t)\right\|_{D(A^{\beta})}   \leq \frac{(CT^{\alpha}\|q\|_{C[0,T]}\Gamma(\alpha ))^n}{\Gamma(n \alpha +1)}\|v_1\|_{C([0, T];D(A^{\beta}))}, \quad t \in[0, T].
\end{equation}
Therefore, we have $(\mathcal{G}(q))^n(v) \in$ $C([0, T];D(A^{\beta}))$. Therefore, for $v^1_1, v^2_1 \in C([0,T];D(A^{\beta}))$, we obtain
$$
\begin{gathered}
\left\|\mathcal{O}^n\left(v_1\right)-\mathcal{O}^n\left(v_2\right)\right\|_{C([0, T] ; D(A^{\beta}))}=\left\|(\mathcal{G}(q))^n\left(v_1-v_2\right)\right\|_{C([0, T] ; D(A^{\beta}))} \\
\leq \frac{(CT^{\alpha}\|q\|_{C[0,T]}\Gamma(\alpha ))^n}{\Gamma(n \alpha +1)}\left\|v^1_1-v^2_1\right\|_{C([0, T] ; D(A^{\beta}))}.
\end{gathered}
$$
It is easy to verify $\frac{(CT^{\alpha}\|q\|_{C[0,T]}\Gamma(\alpha ))^n}{\Gamma(n \alpha +1)} \rightarrow 0$ as $n \rightarrow \infty$, since $\Gamma(n+1) \sim \frac{n^n}{e^n} \sqrt{2 \pi n}$. Therefore, we have $\frac{(CT^{\alpha}\|q\|_{C[0,T]}\Gamma(\alpha ))^n}{\Gamma(n \alpha +1)}<1$ for sufficiently large $n \in \mathbb{N}$. Thus, the operator $\mathcal{O}^n$ is a contraction mapping from $C([0,T];D(A^{\beta}))$ into itself. Hence the mapping $\mathcal{G}^n$ admits unique fixed solution $v_1 \in C([0,T];D(A^{\beta}))$, that is, $\mathcal{G}^n(v_1)=v_1$. Since $\mathcal{G}^{n+1}(v_1)=\mathcal{G}^n(\mathcal{G}(v_1))=\mathcal{G}(v_1)$, the point $\mathcal{G}(v_1)$ is also a fixed point of the mapping $\mathcal{G}^n$. By the uniqueness of the fixed point of $\mathcal{G}^n$, we get $(\mathcal{G}(q))(v_1)+\mathcal{G}(f)=\mathcal{O}(v_1)=v_1$, that is, the equation (3.3.21) has a unique solution $v_1 \in C([0,T];D(A^{\beta}))$. Moreover, for any $n \in \mathbb{N}$, we have
$$
v_1=\mathcal{O}(v_1)=\mathcal{O}^n(v_1)=(-1)^n(\mathcal{G}(q))^n(v_1)+\sum_{i=0}^{n-1}(-1)^i(\mathcal{G}(q))^i(f).
$$
As $\mathcal{G}(f) \in C([0,T];D(A^{\beta}))$, by (\ref{equation20}) and (\ref{equation22}), we obtain
$$
\|v_1\|_{C([0,T];D(A^{\beta}))} \leq\left\|(\mathcal{G}(q))^n(v_1)\right\|_{C([0,T];D(A^{\beta}))}
$$
$$
+\sum_{i=0}^{n-1}\left\|(\mathcal{G}(q))^i(f)\right\|_{C([0,T];D(A^{\beta}))}
$$
$$
\leq \frac{(CT^{\alpha}\|q\|_{C[0,T]}\Gamma(\alpha ))^n}{\Gamma(n \alpha +1)}\|v_1\|_{C([0,T];D(A^{\beta}))}
$$
$$
+\sum_{i=0}^{n-1} \frac{(CT^{\alpha}\|q\|_{C[0,T]}\Gamma(\alpha ))^i}{\Gamma(i \alpha +1)}\|\mathcal{G} f\|_{C([0,T];D(A^{\beta}))}
$$
$$
\leq \frac{(CT^{\alpha}\|q\|_{C[0,T]}\Gamma(\alpha ))^n}{\Gamma(n \alpha +1)} \|v_1\|_{C([0,T];D(A^{\beta}))}
$$
$$
+ \sum_{i=0}^{n-1} \frac{(CT^{\alpha}\|q\|_{C[0,T]}\Gamma(\alpha ))^i}{\Gamma(i \alpha +1)} T^\alpha\|f\|_{C\left([0,T];D\left(A^{\beta}\right)\right)}
$$
and by taking sufficiently large $k \in \mathbb{N}$, we get
\begin{equation}\label{equation23}
\|v_1\|_{C([0,T];D(A^{\beta}))} \leq C E_{\alpha, 1}\left(\Gamma(\alpha ) T^\alpha\|q\|_{C[0, T]}\right)\|f\|_{C\left([0,T]; D\left(A^{\beta}\right)\right)}
\end{equation}
with $C$ depending on $T,\,\, \alpha$.
By (\ref{equation23}), for all $t \in[0, T]$, we have $v \in D(A^{\beta})$ with
$$
A^{\beta} v(t) =\int_0^t A^{\beta} Y(t-s) q(s) v(s) d s
 +\int_0^t A^{\beta} Y(t-s) f(s) d s
$$
and by (\ref{equation8}), we have
$$
\|A^{\beta} Y(t)\| \leq C t^{\alpha-1}.
$$
The mapping $t \mapsto A^{\beta} Y(t)$ belongs to $C([0, T];H)$. Thus
$$
\|A^{\beta} v_1(t)\| \leq\left\|\int_0^t A^{\beta} Y(t-s) q(s) v_1(s) d s\right\|+  \left\|\int_0^t A^{\beta} Y(t-s) f(s) d s\right\|
$$
$$
\leq C \int_0^t(t-s)^{\alpha -1}\left(\|v_1(s)\|_{D(A^{\beta})}+\|f(s)\|_{D\left(A^{\beta}\right)}\right) d s
$$
\begin{equation}\label{equation24}
\leq \frac{C}{\alpha} t^{\alpha }\left(\|v_1\|_{C[0,t];D(A^{\beta}))}+\|f\|_{C\left([0,t];D\left(A^{\beta}\right)\right)}\right).
\end{equation}
Therefore, we have
\begin{equation}\label{equation25}
\|A^{\beta} v_1\|_{C([0,T];H)} \leq C T^{\alpha}\|f\|_{C\left([0,T] ; D\left(A^{\beta}\right)\right)}.
\end{equation}

By the original equation $\partial_{0+,t}^{\alpha} v=-A v-q v+f$, summing (\ref{equation21}), (\ref{equation23}) and (\ref{equation25}), we have $\partial_{0+,t}^{\alpha} v \in C([0, T] ; A^{\beta})$ with the estimate
\begin{equation*}
\left\|\partial_{0+,t}^{\alpha} v\right\|_{C([0,T];A^{\beta})}  \leq C\|f\|_{C\left([0,T];D\left(A^{\beta}\right)\right)}
\end{equation*}
\begin{equation*}
+\|q v\|_{C([0,T];A^{\beta})}
 +\|f\|_{C([0,T];A^{\beta})} \leq C\|f\|_{C\left([0, T] ; D\left(A^{\beta}\right)\right)}.
\end{equation*}

The Theorem \ref{theorem222} is proved.
\end{proof}

Now, similar to problem (\ref{equation4}), we will investigate problem (\ref{equation5}).

 The following theorem holds:
\begin{theorem}\label{theorem333}
Let $\varphi \in D\left(A^{2\beta}\right)$, $\psi \in D\left(A^{2\beta}\right)$ for some $\beta \in(0,1)$ and $q(t) \in C[0, T]$. Then the problem (\ref{equation5}) exists a unique solution $v_2 \in C([0,T];D(A^{\beta}))$ satisfying
$$
A^{\beta} v_2 \in C([0,T];H), \quad \partial_{0+,t}^{\alpha} v_2 \in C([0,T];H).
$$
Furthermore, there exists a constant $c>0$ depending on $\alpha, T, \beta$ and $\|q\|_{C[0, T]}$ such that
\begin{equation}\label{equation26}
\|A^{\beta} v_2\|_{C([0,T];H)}+\left\|\partial_{0+,t}^{\alpha} v_2\right\|_{C([0,T];H)} \leq C\left(\|\varphi\|_{D(A^{2\beta})}+T\|\psi\|_{D(A^{2\beta})}\right)
\end{equation}
and we have
\begin{equation}\label{equation27}
v_2(t)=Z_1(t) \varphi+Z_2(t) \psi-\mathcal{G}(q)(w)(t),
\end{equation}
where
$$
Z_1(t) \varphi=\sum_{k=1}^{\infty}\left(\varphi, e_k\right) E_{\alpha, 1}\left(-\lambda_k^{\beta} t^\alpha\right) e_k,
$$
$$
Z_2(t) \psi=t\sum_{k=1}^{\infty}\left(\psi, e_k\right) E_{\alpha, 1}\left(-\lambda_k^{\beta} t^\alpha\right) e_k
$$
in $C([0,T]; D(A^{\beta}))$ and the operator $\mathcal{G}$ defined in (\ref{equation18}).
\end{theorem}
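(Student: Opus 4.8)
The plan is to treat Theorem \ref{theorem333} as the exact analogue of Theorem \ref{theorem222}: reduce the homogeneous problem carrying nonzero Cauchy data to a Volterra integral equation, solve it by a fixed-point argument on $C([0,T];D(A^{\beta}))$, and then read off the coercive estimate. The essential new work, compared with Theorem \ref{theorem222}, lies in the regularity analysis of the two free terms $Z_1(t)\varphi$ and $Z_2(t)\psi$ that transport the initial data.

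First I would separate variables. Writing the sought solution as $v_2(t)=\sum_{k=1}^{\infty}T_k(t)e_k$ with $T_k(t)=(v_2(t),e_k)$ and pairing (\ref{equation5}) with $e_k$ (using $(A^{\beta}v_2,e_k)=\lambda_k^{\beta}T_k$ and that $q(t)$ is a scalar multiplier), each coefficient solves the scalar Cauchy problem
$$\partial_{0+,t}^{\alpha}T_k(t)+\lambda_k^{\beta}T_k(t)=-q(t)T_k(t),\quad T_k(0)=\varphi_k,\ T_k'(0)=\psi_k.$$
By the variation-of-parameters formula for fractional ordinary differential equations (Laplace transform of the Mittag--Leffler function, as in \cite{forhtur1}), treating $-qT_k$ as a forcing term, one gets
$$T_k(t)=\varphi_k E_{\alpha,1}(-\lambda_k^{\beta}t^{\alpha})+\psi_k t E_{\alpha,2}(-\lambda_k^{\beta}t^{\alpha})-\int_0^t (t-s)^{\alpha-1}E_{\alpha,\alpha}(-\lambda_k^{\beta}(t-s)^{\alpha})q(s)T_k(s)\,ds.$$
Summing over $k$ and recalling the definition (\ref{equation6}) of $Y$, the last term becomes $\mathcal{G}(q)(v_2)(t)$, and the identity takes exactly the form (\ref{equation27}) with $Z_1(t)\varphi,Z_2(t)\psi$ the solution operators of the homogeneous Cauchy problem.

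Next I would establish the regularity of the free terms, which is the crux. For $Z_1$, Proposition \ref{proposition1} gives $|E_{\alpha,1}(-\lambda_k^{\beta}t^{\alpha})|\leq C$, whence $\|A^{\beta}Z_1(t)\varphi\|^2=\sum_k\lambda_k^{2\beta}\varphi_k^2|E_{\alpha,1}(-\lambda_k^{\beta}t^{\alpha})|^2\leq C^2\|\varphi\|_{D(A^{\beta})}^2$, and $Z_1(\cdot)\varphi\in C([0,T];D(A^{\beta}))$ follows (continuity at every $t$, including $t=0$, by dominated convergence for the series). The delicate term is $Z_2$, where $\|A^{\beta}Z_2(t)\psi\|^2=\sum_k\lambda_k^{2\beta}\psi_k^2\,t^2|E_{\alpha,2}(-\lambda_k^{\beta}t^{\alpha})|^2$; the naive bound $|E_{\alpha,2}|\leq C$ leaves a factor $t^2\lambda_k^{2\beta}$ that is not controllable against a $D(A^{\beta})$ weight uniformly as $t\to 0$. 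This is precisely where the stronger hypothesis $\psi\in D(A^{2\beta})$ enters: writing $\lambda_k^{2\beta}=\lambda_k^{4\beta}\lambda_k^{-2\beta}$ and using $|E_{\alpha,2}(-\lambda_k^{\beta}t^{\alpha})|\leq C/(1+\lambda_k^{\beta}t^{\alpha})$, the multiplier $\lambda_k^{-2\beta}t^2(1+\lambda_k^{\beta}t^{\alpha})^{-2}$ is decreasing in $\lambda_k$ and hence bounded by $C^2T^2\lambda_1^{-2\beta}$, giving $\|A^{\beta}Z_2(t)\psi\|\leq CT\lambda_1^{-\beta}\|\psi\|_{D(A^{2\beta})}$ uniformly in $t\in[0,T]$, with continuity again by dominated convergence. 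I expect this uniform control of $Z_2$ up to $t=0$ to be the main obstacle.

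Finally I would close the argument as for Theorem \ref{theorem222}. On the Banach space $C([0,T];D(A^{\beta}))$ define $\mathcal{P}(v)=Z_1(\cdot)\varphi+Z_2(\cdot)\psi-\mathcal{G}(q)(v)$; by the previous step the free part lies in this space, and by the iterate estimate (\ref{equation22}) one has $\|(\mathcal{G}(q))^n(v^1-v^2)\|\leq (CT^{\alpha}\|q\|_{C[0,T]}\Gamma(\alpha))^n\,\Gamma(n\alpha+1)^{-1}\|v^1-v^2\|$, which tends to $0$, so $\mathcal{P}^n$ is a contraction for large $n$ and $\mathcal{P}$ admits a unique fixed point $v_2$, yielding both existence and uniqueness (uniqueness can alternatively be obtained mode-by-mode as in Lemma \ref{lemma3}). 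Summing the resulting geometric-type series exactly as in (\ref{equation23}) bounds $\|A^{\beta}v_2\|_{C([0,T];H)}$ by the right-hand side of (\ref{equation26}); then the equation $\partial_{0+,t}^{\alpha}v_2=-A^{\beta}v_2-qv_2$ shows $\partial_{0+,t}^{\alpha}v_2\in C([0,T];H)$ with the same bound, completing the coercive estimate (\ref{equation26}).
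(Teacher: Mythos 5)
Your argument is correct in substance but follows a genuinely different route from the paper's. The paper does not separate variables for problem (\ref{equation5}) at all: it lifts the initial data by setting $v_2=W+\nu$ with $\nu=\varphi+t\psi$, so that $W$ solves the zero-data problem (\ref{equation28}) with forcing $f_0=-A^{\beta}\nu-q\nu$, checks $\|f_0\|_{C([0,T];D(A^{\beta}))}\leq C\|\nu\|_{C([0,T];D(A^{2\beta}))}$, and then simply invokes Theorem \ref{theorem222}. That is shorter, and it makes transparent why the hypothesis $\varphi,\psi\in D(A^{2\beta})$ appears: the lifting forces $A^{\beta}\nu$ into the source term, which must itself lie in $D(A^{\beta})$. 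Your route --- mode-by-mode variation of parameters, direct regularity analysis of the free terms $Z_1\varphi$, $Z_2\psi$, and a contraction for the resulting Volterra equation --- is longer but actually derives the representation (\ref{equation27}), which the paper essentially asserts, and it reveals that the free terms only need $\varphi,\psi\in D(A^{\beta})$. On that last point your diagnosis of ``the crux'' is slightly off: the naive bound $|E_{\alpha,2}(-\lambda_k^{\beta}t^{\alpha})|\leq C$ already gives $\|A^{\beta}Z_2(t)\psi\|\leq Ct\,\|\psi\|_{D(A^{\beta})}$, which is perfectly uniform on $[0,T]$, so no extra smoothness of $\psi$ is needed there; your subsequent estimate under the stronger hypothesis is of course still valid, and the $D(A^{2\beta})$ assumption is what the paper's lifting (and the stated bound (\ref{equation26})) genuinely uses. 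Finally, note that your $Z_2(t)\psi=t\sum_k\psi_k E_{\alpha,2}(-\lambda_k^{\beta}t^{\alpha})e_k$ is the standard (and correct) solution operator for the second initial condition when $1<\alpha<2$, whereas the theorem as printed writes $E_{\alpha,1}$ in $Z_2$; this appears to be a typo in the paper rather than an error on your side.
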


Proof. We split the solution $w$ of (\ref{equation5}) into $v_2=W+\nu$, where $W$ satisfies
\begin{equation}\label{equation28}
\left\{\begin{array}{l}
\partial_{0+,t}^{\alpha}W(t)+A^{\beta}W(t)+q(t)W(t)=f_0(t), \quad 0<t \leq T, \\
W(t)\big|_{t=0}=0,\quad W_{t}(t)\big|_{t=0}=0.
\end{array}\right.
\end{equation}
with $f_0(t)=-A^{\beta} \nu-q(t) \nu$, $\nu=\varphi+t\psi$. By the conditions of Theorem \ref{theorem333}, we have $f_0(t) \in C\left([0, T] ; D\left(A^{\beta}\right)\right)$, and the estimate
\begin{equation*}
\left\|f_0\right\|^2_{C\left([0, T] ; D\left(A^{\beta}\right)\right)}=\max\limits_{0\le t \le T}\sum_{k=1}^{\infty}\lambda_k^{2\beta}\left|\left(A^{\beta} \nu+q(t) \nu, e_k\right) \right|^2
\end{equation*}
$$
=\max\limits_{0\le t \le T}\sum_{k=1}^{\infty}\lambda_k^{2\beta}\left(\left|\lambda_k^{\beta}\left(\nu,e_k\right) +q(t)\left(\nu,e_k\right)\right)\right|^2
$$
$$
\leq 2\max\limits_{0\le t \le T}\sum_{k=1}^{\infty}\lambda_k^{2\beta}\left(\lambda_k^{2\beta} +\|q(t)\|_{C[0,T]}\right)\left|\left(\nu,e_k\right)\right|^2 \leq C^2 \|\nu\|^2_{C([0,T];D(A^{2\beta})}
$$
or
\begin{equation}\label{equation29}
\left\|f_0\right\|_{C\left([0, T] ; D\left(A^{\beta}\right)\right)}  \leq C \|\nu\|_{C([0,T];D(A^{2\beta}))},
\end{equation}
where
$$
\|\nu\|_{C([0,T];D(A^{2\beta}))}\leq  \|\varphi\|_{D(A^{2\beta})}+T\|\psi\|_{D(A^{2\beta})}.
$$

Moreover, by Theorem \ref{theorem222}, the problem (\ref{equation28}) exists a unique solution $W \in C([0,T];D(A^{\beta}))$ satisfying
$$
A^{\beta} W \in C([0,T];H) \quad \text { and } \quad \partial_{0+,t}^{\alpha} W \in C([0, T] ; H)
$$
and the estimate
$$
\left\|\partial_{0+,t}^{\alpha} W\right\|_{C([0,T];H)}+\|A^{\beta} W\|_{C([0, T] ; H)} \leq
$$$$
C\left\|f_0\right\|_{C\left([0,T];D\left(A^{\beta}\right)\right)} \leq C\left(\|\varphi\|_{D(A^{2\beta})}+T\|\psi\|_{D(A^{2\beta})}\right).
$$
Therefore, the problem (\ref{equation5}) admits a unique solution $w=W+\nu \in C([0,T];D(A^{\beta}))$ satisfying
$$
A^{\beta} w \in C([0,T];H) \quad \text { and } \quad \partial_{0+,t}^{\alpha} w \in C([0,T];H)
$$
and the estimate (\ref{equation26}) holds.
Therefore, we have established the existence, uniqueness, and regularity of the solution for the direct problem.

\begin{theorem}\label{theorem44}
  Let $\varphi \in D\left(A^{2\beta}\right),$ $\psi \in D\left(A^{2\beta}\right)$ and $f \in C\left([0,T];D\left(A^{\beta}\right)\right)$ for some $\beta \in(0,1)$, and $q \in$ $C[0,T]$. Then there exists a unique solution $u \in C([0,T];D(A^{\beta}))$ to (\ref{equation1})-(\ref{equation2}) such that $\partial_{0+,t}^{\alpha} u \in C([0, T];H)$. Moreover there exists a constant $c>0$ such that
\begin{equation*}\label{equation30}
\|u\|_{C([0, T] ; D(A^{\beta}))} \leq C E_{\alpha, 1}\left(\Gamma(\alpha) T\|q\|_{C[0, T]}\right)
\end{equation*}
\begin{equation}\label{equation30}
\times\left[\|\varphi\|_{D\left(A^{2\beta}\right)}+\|\psi\|_{D\left(A^{2\beta}\right)}+\|f\|_{C\left([0, T] ; D\left(A^{\beta}\right)\right)}\right],
\end{equation}
and we get
\begin{equation}\label{equation31}
u(t)=Z_1(t) \varphi+Z_2(t) \psi+\mathcal{G}(f)(t)-(\mathcal{G}(q))(u)(t),
\end{equation}
where $\mathcal{G}$ is defined by (\ref{equation18}).
\end{theorem}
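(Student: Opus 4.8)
The plan is to exploit the linearity of (\ref{equation1})--(\ref{equation2}) and assemble the result from the two auxiliary theorems already established. I would write the sought solution as $u=v_1+v_2$, where $v_1$ solves the zero-data forced problem (\ref{equation4}) and $v_2$ solves the homogeneous-forcing problem with the prescribed initial data (\ref{equation5}). Because $\partial_{0+,t}^{\alpha}$, $A^{\beta}$ and multiplication by $q(t)$ all act additively, $u=v_1+v_2$ satisfies equation (\ref{equation1}), and adding the two sets of initial conditions recovers (\ref{equation2}); thus the construction of $u$ is immediate once $v_1,v_2$ are in hand. Invoking Theorem \ref{theorem222} (valid since $f\in C([0,T];D(A^{\beta}))$ and $q\in C[0,T]$) gives a unique $v_1\in C([0,T];D(A^{\beta}))$ with $A^{\beta}v_1,\partial_{0+,t}^{\alpha}v_1\in C([0,T];H)$ and the integral equation (\ref{equation9}), i.e. $v_1=\mathcal{G}(f)-(\mathcal{G}(q))(v_1)$; invoking Theorem \ref{theorem333} (valid since $\varphi,\psi\in D(A^{2\beta})$) gives a unique $v_2\in C([0,T];D(A^{\beta}))$ with the same regularity and the representation (\ref{equation27}), $v_2=Z_1(t)\varphi+Z_2(t)\psi-(\mathcal{G}(q))(v_2)$. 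Summing the two regularity statements yields the required conclusion $u\in C([0,T];D(A^{\beta}))$ with $\partial_{0+,t}^{\alpha}u\in C([0,T];H)$. Uniqueness follows because the difference of two solutions of (\ref{equation1})--(\ref{equation2}) solves the same problem with $f\equiv 0$ and $\varphi=\psi=0$, which by the uniqueness part of Theorem \ref{theorem222} (vanishing forcing) must be identically zero.

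For the representation (\ref{equation31}) I would add (\ref{equation9}) and (\ref{equation27}) and use that the map $g\mapsto(\mathcal{G}(q))(g)=\int_0^t Y(t-s)q(s)g(s)\,ds$ is linear in its argument:
$$
u=v_1+v_2=Z_1(t)\varphi+Z_2(t)\psi+\mathcal{G}(f)(t)-(\mathcal{G}(q))(v_1)(t)-(\mathcal{G}(q))(v_2)(t).
$$
Since $(\mathcal{G}(q))(v_1)+(\mathcal{G}(q))(v_2)=(\mathcal{G}(q))(v_1+v_2)=(\mathcal{G}(q))(u)$, this collapses to exactly (\ref{equation31}); note that this is verified as a consequence for the constructed $u$, although one could equally well take it as the defining fixed-point equation and rerun the contraction argument of Theorem \ref{theorem222} on the operator $u\mapsto Z_1(t)\varphi+Z_2(t)\psi+\mathcal{G}(f)(t)-(\mathcal{G}(q))(u)(t)$ using the iterated bound (\ref{equation22}).

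For the quantitative estimate (\ref{equation30}) I would combine the bound (\ref{equation23}) for $v_1$, namely $\|v_1\|_{C([0,T];D(A^{\beta}))}\le C\,E_{\alpha,1}(\Gamma(\alpha)T^{\alpha}\|q\|_{C[0,T]})\|f\|_{C([0,T];D(A^{\beta}))}$, with the bound (\ref{equation26}) for $v_2$, recalling the functional-analytic identity $\|v_2\|_{C([0,T];D(A^{\beta}))}=\|A^{\beta}v_2\|_{C([0,T];H)}$ so that (\ref{equation26}) genuinely controls the $D(A^{\beta})$-norm of $v_2$ and not merely its image under $A^{\beta}$. Applying the triangle inequality $\|u\|\le\|v_1\|+\|v_2\|$ and the elementary monotonicity fact $E_{\alpha,1}(z)\ge 1$ for $z\ge 0$, one absorbs the $v_2$-contribution under the same Mittag-Leffler prefactor and gathers all three data norms $\|\varphi\|_{D(A^{2\beta})}$, $\|\psi\|_{D(A^{2\beta})}$, $\|f\|_{C([0,T];D(A^{\beta}))}$ into a single factor $E_{\alpha,1}(\Gamma(\alpha)T\|q\|_{C[0,T]})$, which is (\ref{equation30}).

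The genuine analytic content --- the contraction mapping, the coercive estimate (\ref{equation8}) built on Proposition \ref{proposition1}, and the Gronwall-type summation producing the Mittag-Leffler bound --- has already been executed in Theorems \ref{theorem222} and \ref{theorem333}, so this statement is essentially their synthesis. Accordingly, I expect no serious obstacle; the one point requiring care is the \emph{consolidation of the two estimates under the single prefactor} in (\ref{equation30}), where one must reconcile the $T^{\alpha}$ appearing inside the Mittag-Leffler function in (\ref{equation23}) with the bare $T$ in (\ref{equation30}) by absorbing the $T$-dependent powers into the constant $C$ and using the monotonicity of $E_{\alpha,1}$.
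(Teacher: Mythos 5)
Your proposal is correct and follows exactly the route the paper intends: the paper introduces the splitting of the direct problem into the auxiliary problems (\ref{equation4}) and (\ref{equation5}) precisely so that Theorem \ref{theorem44} follows by superposing Theorem \ref{theorem222} and Theorem \ref{theorem333}, which is what you do (the paper in fact leaves this synthesis implicit rather than writing out a separate proof). Your added care about the $T^{\alpha}$ versus $T$ discrepancy inside the Mittag--Leffler prefactor and the identity $\|v_2\|_{C([0,T];D(A^{\beta}))}=\|A^{\beta}v_2\|_{C([0,T];H)}$ are correct fillings of details the paper glosses over.
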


The continuous dependence of the solution to problem (\ref{equation1})-(\ref{equation2}) on the data is given
by the following theorem.

\begin{theorem}\label{theorem55}
Under the same conditions as Theorem \ref{theorem44}, the solution of the direct problem (\ref{equation1})-(\ref{equation2}) depends continuously on the given data, that is
$$
\|u-\hat{u}\|_{C([0, T] ; D(A^\beta))} \leq C\Bigg[\|\varphi-\hat{\varphi}\|_{D\left(A^{2\beta}\right)}
$$
\begin{equation}\label{equation32}
+\|\psi-\hat{\psi}\|_{D\left(A^{2\beta}\right)}+\|q-\hat{q}\|_{C[0, T]}+\|f-\hat{f}\|_{C\left([0, T] ; D\left(A^\beta\right)\right)}\Bigg],
\end{equation}
where $C>0$ depending on $\alpha, \,\, T$ and $\|q\|_{C[0, T]}$.
\end{theorem}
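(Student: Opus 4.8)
The plan is to run the standard linearization-plus-Gronwall argument off the representation formula (\ref{equation31}). Writing $u$ and $\hat u$ for the solutions attached to the data $(\varphi,\psi,q,f)$ and $(\hat\varphi,\hat\psi,\hat q,\hat f)$, formula (\ref{equation31}) applied to each and subtracted gives
$$
u(t)-\hat u(t)=Z_1(t)(\varphi-\hat\varphi)+Z_2(t)(\psi-\hat\psi)+\mathcal{G}(f-\hat f)(t)-\int_0^t Y(t-s)\bigl[q(s)u(s)-\hat q(s)\hat u(s)\bigr]\,ds.
$$
The only genuinely nonlinear contribution is the last integral, and I would split it in the usual way,
$$
q(s)u(s)-\hat q(s)\hat u(s)=q(s)\bigl(u(s)-\hat u(s)\bigr)+\bigl(q(s)-\hat q(s)\bigr)\hat u(s),
$$
so that the unknown difference $u-\hat u$ appears linearly in the first piece, while the data difference $q-\hat q$ multiplies the already-controlled solution $\hat u$ in the second.

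Next I would measure everything in the $D(A^\beta)$-norm (apply $A^\beta$, take the $H$-norm) and estimate term by term. For the first two terms, Proposition \ref{proposition1} yields the uniform bound $|E_{\alpha,1}(-\lambda_k^\beta t^\alpha)|\le C$, whence $\|Z_1(t)(\varphi-\hat\varphi)\|_{D(A^\beta)}\le C\|\varphi-\hat\varphi\|_{D(A^\beta)}\le C\|\varphi-\hat\varphi\|_{D(A^{2\beta})}$ and $\|Z_2(t)(\psi-\hat\psi)\|_{D(A^\beta)}\le CT\|\psi-\hat\psi\|_{D(A^{2\beta})}$, where the passage to the $D(A^{2\beta})$-norm uses the embedding $D(A^{2\beta})\hookrightarrow D(A^\beta)$ (valid since $\lambda_k\ge\lambda_1>0$). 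For $\mathcal{G}(f-\hat f)$ I would invoke the kernel estimate (\ref{equation8}), giving $\|\mathcal{G}(f-\hat f)(t)\|_{D(A^\beta)}\le C\int_0^t(t-s)^{\alpha-1}\|f(s)-\hat f(s)\|_{D(A^\beta)}\,ds\le \tfrac{C}{\alpha}T^\alpha\|f-\hat f\|_{C([0,T];D(A^\beta))}$. The piece carrying $q-\hat q$ is handled identically, except that the factor $\|\hat u(s)\|_{D(A^\beta)}$ is now bounded by the a priori estimate (\ref{equation30}) of Theorem \ref{theorem44}, producing a contribution $\le C\|q-\hat q\|_{C[0,T]}\|\hat u\|_{C([0,T];D(A^\beta))}\le C\|q-\hat q\|_{C[0,T]}$ with the constant depending on the data of $\hat u$.

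Collecting these bounds and setting $U(t):=\|u(t)-\hat u(t)\|_{D(A^\beta)}$, the surviving term $q(u-\hat u)$ leaves an integral inequality of the form
$$
U(t)\le a+C\|q\|_{C[0,T]}\int_0^t(t-s)^{\alpha-1}U(s)\,ds,
$$
where the constant $a$ gathers exactly the four data-difference terms displayed on the right-hand side of (\ref{equation32}). The concluding step is to apply the generalized Gronwall inequality of Lemma \ref{lemma1} with $\gamma=1$ (so $s^{\gamma-1}\equiv1$ and $\alpha+\gamma-1=\alpha>0$, matching its hypotheses for the weakly singular kernel $(t-s)^{\alpha-1}$), which gives $U(t)\le a\,Z_{\alpha,1}\bigl((C\|q\|_{C[0,T]}\Gamma(\alpha))^{1/\alpha}t\bigr)$. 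Since $Z_{\alpha,1}$ is increasing, taking the supremum over $t\in[0,T]$ absorbs the factor $Z_{\alpha,1}(\cdots T)$ into a constant $C=C(\alpha,T,\|q\|_{C[0,T]})$ and produces precisely (\ref{equation32}). I expect the work to be bookkeeping rather than conceptual: the two points requiring care are (i) correctly isolating the term $(q-\hat q)\hat u$ and controlling $\hat u$ through the already-established a priori bound (\ref{equation30}), and (ii) checking that the singular convolution kernel falls under Lemma \ref{lemma1}, which is exactly the reason that Gronwall variant was recorded in the preliminaries.
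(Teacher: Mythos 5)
Your argument is correct and is essentially the paper's own proof: both start from the integral representation, split the product difference so that $q-\hat q$ multiplies an already-controlled solution while the unknown difference appears linearly under the weakly singular convolution, and close with the generalized Gronwall inequality of Lemma \ref{lemma1}. The only cosmetic differences are that the paper runs the estimate on the two pieces $v$ and $w$ of the splitting $u=v+w$ separately rather than on the combined formula (\ref{equation31}), and uses the symmetric variant $(q-\hat q)v+\hat q(v-\hat v)$ of your decomposition, so its constant is phrased in terms of $\|\hat q\|_{C[0,T]}$.
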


\begin{proof}
Let $u$ and $\hat{u}$ be the solutions to the direct problem (\ref{equation1})-(\ref{equation2}), respectively to the functions $\{q, f, \varphi, \psi\}$ and $\{\hat{q}, \hat{f}, \hat{\varphi}, \hat{\psi}\}$. Using $u(t)=v(t)+w(t)$, we have
$$
|u(t)-\hat{u}(t)| \leq|v(t)-\hat{v}(t)|+|w(t)-\hat{w}(t)|,
$$
where $\{v, w\}$ and $\{\hat{v}, \hat{w}\}$ corresponding to the data $\{q, f, \varphi, \psi\}$ and $\{\hat{q}, \hat{f}, \hat{\varphi}, \hat{\psi}\}$, respectively. Using (\ref{equation9}), we obtain
$$
\|v(t)-\hat{v}(t)\|_{D(A^{\beta})} \leq\left\|\int_0^t A^{\beta} Y(t-s)[q(s) v(s)-\hat{q}(s) \hat{v}(s)] d s\right\|
$$
$$
+\left\|\int_0^t A^{\beta} Y(t-s)(f(s)-\hat{f}(s)) d s\right\|
$$
$$
\leq C \int_0^t(t-s)^{\alpha-1}\left|q(s)-\hat{q}(s)\right| \left\| v(s)\right\|_{D(A^{\beta})} d s
$$$$
+C \int_0^t(t-s)^{\alpha-1}\left\| v(s)-\hat{v}(s) \right\|_{D(A^{\beta})} \left|\hat{q}(s)\right| d s
$$
$$
+C \int_0^t(t-s)^{\alpha-1}\|f(s)-\hat{f}(s)\|_{D\left(A^{\beta}\right)} d s\leq C \frac{t^{\alpha}}{\alpha}\|v\|_{C([0,T];D(A^{\beta}))}\|q-\hat{q}\|_{C[0,T]}
$$
$$
+C \frac{t^{\alpha}}{\alpha}\|f-\hat{f}\|_{C\left([0, T] ; D\left(A^{\beta}\right)\right)}+C\|\hat{q}\|_{C[0, T]} \int_0^t(t-s)^{\alpha-1}\|v(s)-\hat{v}(s)\|_{D(A^{\beta})} d s.
$$

Then, according to the Grönwall's inequality given in Lemma 2, we obtain:
\begin{equation}\label{equation33}
\|v(t)-\hat{v}(t)\|_{D(A^{\beta})} \leq C\left[\|q-\hat{q}\|_{C[0, T]}+\|f-\hat{f}\|_{C\left([0, T] ; D\left(A^{\beta}\right)\right)}\right], \end{equation}
where $C>0$ depending on $\alpha, \,\, T,\,\,  \|\hat{q}\|_{C[0, T]}$.
The same arguments as applied to (\ref{equation31}) before lead to
\begin{equation}\label{equation34}
\|w(t)-\hat{w}(t)\|_{D(A^{\beta})} \leq C\left[\|\varphi-\hat{\varphi}\|_{D\left(A^{2\beta}\right)}+\|\psi-\hat{\psi}\|_{D\left(A^{2\beta}\right)}+\|q-\hat{q}\|_{C[0, T]}\right],
\end{equation}
where $C>0$ depending on $\alpha, \,\, T$ and $\|\hat{q}\|_{C[0, T]}$.
Finally, from (\ref{equation33}), (\ref{equation34}), we get the desired estimate (\ref{equation32}). The Theorem \ref{theorem55} is proved.

\end{proof}

\section*{Investigation of the inverse problem (\ref{equation1}) - (\ref{equation3})}

The following main result holds for the inverse problem.

\begin{theorem}\label{theorem44444}
Let (I1)-(I3) be held. Then the problem of finding a solution of (\ref{equation1})-(\ref{equation3}) is equivalent to the problem of determining the function $q(t) \in C[0,T]$ satisfying
\begin{equation}\label{equation35}
q(t)=\frac{1}{\mu(t)}\left(\Phi[f](t)-\partial_{0+,t}^{\alpha} \mu(t)-\Phi[A^{\beta} u](t)\right),
\end{equation}
where
$$
A^{\beta}u(t)=A^{\beta}Z_1(t) \varphi+A^{\beta}Z_2(t) \psi
$$
\begin{equation}\label{equation36}
+\int_0^t A^{\beta}Y(t-s) f(s) d s-\int_0^t A^{\beta}Y(t-s) q(s) u(s) d s.
\end{equation}
\end{theorem}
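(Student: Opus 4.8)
The plan is to establish the equivalence by applying the bounded linear functional $\Phi$ to the abstract equation (\ref{equation1}) and exploiting the overdetermination (\ref{equation3}), treating the two implications separately. The central technical fact I will need is that $\Phi$ commutes with the Caputo derivative and with $A^{\beta}$ along the solution, i.e. that $\Phi[\partial_{0+,t}^{\alpha}u](t)=\partial_{0+,t}^{\alpha}\Phi[u](t)$ and that $\Phi[A^{\beta}u](t)$ is a well-defined continuous scalar function. Both will follow from the regularity furnished by Theorem \ref{theorem44}: under (I1) the direct problem has a unique solution $u\in C([0,T];D(A^{\beta}))$ with $\partial_{0+,t}^{\alpha}u,\,A^{\beta}u\in C([0,T];H)$, so that $\partial_{0+,t}^{\alpha}u(t)$ is an $H$-valued Bochner integral of $u''$ against the kernel $(t-\tau)^{1-\alpha}/\Gamma(2-\alpha)$; since $\Phi$ is bounded and linear, it passes inside the Bochner integral and commutes with the strong derivatives of $u$, which yields the first identity.

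\emph{Forward implication.} I would suppose that $q\in C[0,T]$ solves the inverse problem, so that its direct solution $u$ satisfies (\ref{equation1})--(\ref{equation2}) together with $\Phi[u](t)=\mu(t)$. Applying $\Phi$ to (\ref{equation1}) and using linearity gives
$$
\partial_{0+,t}^{\alpha}\Phi[u](t)+\Phi[A^{\beta}u](t)+q(t)\Phi[u](t)=\Phi[f](t).
$$
Substituting $\Phi[u]=\mu$ and solving for $q(t)$, which is legitimate because $|\mu(t)|\ge\mu_0>0$ by (I2), produces exactly (\ref{equation35}). The representation (\ref{equation36}) is then obtained by applying $A^{\beta}$ term by term to the solution formula (\ref{equation31}) of Theorem \ref{theorem44}, using that $A^{\beta}Y(t)$ acts as in (\ref{equation7}) and that the resulting series and integrals converge in $H$ by the estimate (\ref{equation8}); continuity of the right-hand side of (\ref{equation35}) on $[0,T]$ then returns $q\in C[0,T]$.

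\emph{Converse implication.} Conversely, I would take $q\in C[0,T]$ satisfying (\ref{equation35})--(\ref{equation36}) and let $u$ be the associated unique direct solution from Theorem \ref{theorem44}. Setting $p(t):=\Phi[u](t)-\mu(t)$ and again applying $\Phi$ to (\ref{equation1}), I would subtract the rearranged identity $\partial_{0+,t}^{\alpha}\mu+\Phi[A^{\beta}u]+q\mu=\Phi[f]$ equivalent to (\ref{equation35}); since the same $u$ appears in both relations, the terms $\Phi[A^{\beta}u]$ and $\Phi[f]$ cancel, leaving the homogeneous fractional equation
$$
\partial_{0+,t}^{\alpha}p(t)+q(t)p(t)=0,\qquad 0<t\le T.
$$
The compatibility assumption (I3) supplies the vanishing Cauchy data $p(0)=\Phi[\varphi]-\mu(0)=0$ and $p'(0)=\Phi[\psi]-\mu'(0)=0$, so inverting the Caputo operator recasts the equation as the Volterra equation $p(t)=-\frac{1}{\Gamma(\alpha)}\int_0^t(t-s)^{\alpha-1}q(s)p(s)\,ds$, whence $|p(t)|\le\frac{\|q\|_{C[0,T]}}{\Gamma(\alpha)}\int_0^t(t-s)^{\alpha-1}|p(s)|\,ds$. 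The generalized Grönwall inequality of Lemma \ref{lemma1} (with $\gamma=1$ and $a\equiv0$) then forces $p\equiv0$, i.e. $\Phi[u]=\mu$, so $u$ meets the overdetermination and $(q,u)$ solves the inverse problem.

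I expect the main obstacle to be the rigorous justification of the interchange $\Phi[\partial_{0+,t}^{\alpha}u]=\partial_{0+,t}^{\alpha}\Phi[u]$ and of the termwise application of $A^{\beta}$ in (\ref{equation36}): one must verify that $u$ is regular enough for $\partial_{0+,t}^{\alpha}u(t)$ to be a genuine $H$-valued Bochner integral, and that the scalar function $\partial_{0+,t}^{\alpha}\mu$ appearing in (\ref{equation35}) is thereby well-defined and continuous, consistent with the hypotheses (I1)--(I2). Once these commutation and convergence facts are secured, the algebraic rearrangement and the Grönwall argument are routine.
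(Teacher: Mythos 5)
Your proposal is correct and follows essentially the same route as the paper: apply $\Phi$ to the equation and use (I2) to solve for $q$ in one direction, and in the other direction show that $y=\Phi[u]-\mu$ satisfies the homogeneous fractional Cauchy problem with zero data (via (I3)), convert to a Volterra equation, and conclude $y\equiv 0$ by the generalized Gr\"onwall inequality of Lemma \ref{lemma1}. Your extra attention to justifying the interchange $\Phi[\partial_{0+,t}^{\alpha}u]=\partial_{0+,t}^{\alpha}\Phi[u]$ is a point the paper passes over silently, but it does not change the argument.
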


On the other hand, if (\ref{equation35}) has a solution and the technical condition (I2)-(I4) holds, then there exists a solution to the inverse problem (\ref{equation1})-(\ref{equation3}).

\begin{proof}

We split the proof into two steps.
\textit{Step 1.} Suppose that the problem (\ref{equation1})-(\ref{equation3}) has a solution $q(t) \in C[0,T]$. Taking into account (I2), and apply $\Phi$ to the equation of (\ref{equation1}) yields
\begin{equation}\label{equation37}
\partial_{0+,t}^{\alpha} \Phi[u](t)+\Phi[A^{\beta} u](t)+q(t) \Phi[u](t)=\Phi[f](t).
\end{equation}
Taking into account conditions  (\ref{equation2}),(\ref{equation3}) and I2)-I4),  we obtain
$$
\partial_{0+,t}^{\alpha} \mu(t)+\Phi[A^{\beta} u](t)+q(t)\mu(t)=\Phi[f](t).
$$
From the above relation, we obtain the following equation for $q(t)$
$$
q(t)=\frac{1}{\mu(t)}\left(\Phi[f](t)-\partial_{0+,t}^{\alpha} \mu(t)-\Phi[A^{\beta} u](t)\right).
$$
From this, we obtain the relation (\ref{equation35}) of Theorem \ref{theorem44444}.

\textit{Step 2.} Suppose now that $q \in C[0, T]$ satisfy (\ref{equation35}). In order to prove that $q$ is the solution to the inverse problem (\ref{equation1})-(\ref{equation3}), it suffices to show that (\ref{equation3}). By the equation (\ref{equation1}), we have (\ref{equation37}). Together with (\ref{equation35}) and (I3), we obtain that $y(t):=$ $\Phi[u](t)-\mu(t)$ satisfies
\begin{equation}\label{equation38}
\left\{\begin{array}{l}
\partial_{0+,t}^{\alpha} y(t)+q(t) y(t)=0, \quad t \in(0, T], \\
y(0)=0,\,\, y'(0)=0.
\end{array}\right.
\end{equation}
Therefore, we have (see \cite[p. 199]{forhtur1})
$$
y(t)=-\frac{1}{\Gamma(\alpha)} \int_0^t(t-s)^{\alpha-1} q(s) y(s) d s.
$$
Then for $q(t) \in C[0,T]$, we have
$$
\|y\|_{C[0, t]} \leq \frac{1}{\Gamma(\alpha)}\|q\|_{C[0, T]} \int_0^t(t-s)^{\alpha-1}\|y\|_{C[0, s]} d s
$$
for all $t \in[0, T]$. Hence, According to Lemma \ref{lemma1}, we have $\|y\|_{C[0, t]}=0$ for all $t \in[0, T]$, which implies $\Phi[u](t)=\mu(t)$ on $[0, T]$. The theorem \ref{theorem44444} is proved.
\end{proof}

\textbf{Remark} 
(I3) is the consistency condition for our problem (\ref{equation1})-(\ref{equation3}), which guarantees that the inverse problem (\ref{equation1})-(\ref{equation3}) is equivalent to (\ref{equation35}).

\begin{theorem}\label{theorem555}
Under hypotheses (I1)-(I4), there exists a sufficiently small $T > 0$ such that the inverse problem (\ref{equation1})-(\ref{equation3}) has a unique solution $q(t) \in C[0,T]$.
\end{theorem}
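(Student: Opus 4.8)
The plan is to recast the inverse problem as a fixed-point equation for $q$ and solve it by contraction on a short time interval. By Theorem \ref{theorem44444}, under (I1)--(I4) solving (\ref{equation1})--(\ref{equation3}) is equivalent to finding $q\in C[0,T]$ satisfying (\ref{equation35}). For each $q\in C[0,T]$, Theorem \ref{theorem44} furnishes a unique direct solution $u_q\in C([0,T];D(A^\beta))$ with $A^\beta u_q\in C([0,T];H)$, so using (\ref{equation36}) I would introduce the nonlinear operator
$$
P(q)(t):=\frac{1}{\mu(t)}\Big(\Phi[f](t)-\partial_{0+,t}^{\alpha}\mu(t)-\Phi[A^\beta u_q](t)\Big),\qquad t\in[0,T].
$$
Since $\Phi$ is a bounded linear functional and $u_q\in C([0,T];D(A^\beta))$, the quantity $\Phi[A^\beta u_q]$ is continuous, so $P$ maps $C[0,T]$ into itself, and a fixed point of $P$ is precisely a solution of (\ref{equation35}).

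Next I would produce a closed ball on which $P$ is a self-map. Using $|\mu(t)|\geq\mu_0$ from (I2) and $|\Phi[A^\beta u_q](t)|\leq\|\Phi\|\,\|u_q\|_{C([0,T];D(A^\beta))}$, one gets
$$
\|P(q)\|_{C[0,T]}\leq\frac{1}{\mu_0}\Big(\|\Phi[f]\|_{C[0,T]}+\|\partial_{0+,t}^{\alpha}\mu\|_{C[0,T]}+\|\Phi\|\,\|u_q\|_{C([0,T];D(A^\beta))}\Big),
$$
and the a priori bound (\ref{equation30}) controls $\|u_q\|_{C([0,T];D(A^\beta))}$ through the Mittag--Leffler factor $E_{\alpha,1}\big(\Gamma(\alpha)T\|q\|_{C[0,T]}\big)$. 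Because this factor tends to $1$ as $T\to 0$, I would first fix a radius $M$ determined by the data (exceeding the limiting value of the right-hand side as $T\to0$) and then take $T$ small enough that $P$ maps the closed ball $\overline{B}_M=\{\,q\in C[0,T]:\|q\|_{C[0,T]}\leq M\,\}$ into itself.

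The decisive step is the contraction estimate. For $q,\hat q\in\overline{B}_M$ the data terms cancel and
$$
|P(q)(t)-P(\hat q)(t)|\leq\frac{\|\Phi\|}{\mu_0}\,\|u_q-u_{\hat q}\|_{C([0,T];D(A^\beta))}.
$$
Here I would invoke the continuous-dependence result of Theorem \ref{theorem55}: since $q$ and $\hat q$ correspond to identical data $\varphi,\psi,f$, the estimates (\ref{equation33})--(\ref{equation34}) collapse to $\|u_q-u_{\hat q}\|_{C([0,T];D(A^\beta))}\leq C(T)\,\|q-\hat q\|_{C[0,T]}$. The main work, and the principal obstacle, is to make the $T$-dependence of $C(T)$ quantitative rather than merely continuous: in the derivation of (\ref{equation33}) and (\ref{equation34}) the $q$-difference enters only through a kernel term carrying a factor $t^\alpha/\alpha$, while the Grönwall multiplier supplied by Lemma \ref{lemma1} (equivalently the Mittag--Leffler factor) tends to $1$ as $t\to0$; hence $C(T)=O(T^\alpha)\to 0$. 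Consequently $\tfrac{\|\Phi\|}{\mu_0}C(T)<1$ for all sufficiently small $T$.

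Finally, shrinking $T$ if necessary so that the self-mapping and contraction requirements hold simultaneously, the Banach fixed-point theorem yields a unique $q\in\overline{B}_M\subset C[0,T]$ with $P(q)=q$, that is, a unique solution of the integral equation (\ref{equation35}). By the equivalence established in Theorem \ref{theorem44444}, this $q$ is the unique solution of the inverse problem (\ref{equation1})--(\ref{equation3}) on $[0,T]$, which completes the argument.
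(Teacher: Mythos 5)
Your proposal is correct and follows essentially the same route as the paper: the same fixed-point operator built from (\ref{equation35}), a self-map estimate on a closed ball via the a priori bound (\ref{equation30}) and hypothesis (I4), a contraction estimate via the continuous-dependence Theorem \ref{theorem55}, and the Banach fixed-point theorem for $T$ small. If anything, your remark that the Lipschitz constant behaves like $O(T^\alpha)$ makes explicit a point the paper only asserts when choosing $T_2$.
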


\begin{proof}
  Let us define the following operator
\begin{equation}\label{equation39}
\begin{cases}\mathcal{Q}: C[0, T] & \rightarrow \quad C[0, T], \\
q \rightarrow \mathcal{Q}(q): &  \quad t \mapsto \frac{1}{\mu(t)}\left(\Phi[f](t)-\partial_{0+,t}^{\alpha} \mu(t)-\Phi[A^{\beta} u](t)\right).
\end{cases}
\end{equation}

To prove that the operator $\mathcal{Q}$ admits a fixed point, start by showing that $\mathcal{Q}$ maps a certain closed convex set into itself, in the space $C[0, T]$ equipped with the sup norm.

First, we will show that there exists a positive constant $T_1>0$ such that for any $T \in (0,T_1]$, there is a radius $R>0$ such that the closed convex ball
$$
B_R=\left\{q \in C[0, T]:\|q\|_{C[0, T]} \leq R\right\}
$$
is stable by the operator $\mathcal{Q}$; that is, $\mathcal{Q}(B_R) \subset B_R$.
According to the definition of the operator $A^{\beta}$ and linearity, and due to (I4), we have
$$
\Phi[A^{\beta} u](t)=\sum_{k=1}^{\infty} \lambda_k^{\beta}\left(u, e_k\right) \Phi\left[e_k\right]
$$
and by the Hölder's inequality, we obtain
$$
|\Phi[A^{\beta} u](t)| \leq\left(\sum_{k=1}^{\infty} \Phi^2\left[e_k\right]\right)^{1 / 2}\left(\sum_{k=1}^{\infty}\left(\lambda_k^{\beta}\left(u, e_k\right)\right)^2\right)^{1 / 2}=c\|u(t)\|_{D(A^{\beta})}
$$
or
\begin{equation}\label{equation40}
\|\Phi[A^{\beta} u]\|_{L^{\infty}(0, T)} \leq \widetilde{C}\|u\|_{C([0,T];D(A^{\beta}))}.
\end{equation}
Then, for any $q(t) \in B_R$ and from the linearity of $\Phi[\cdot]$ and due to condition (I4), we have
\begin{equation*}
 |\mathcal{Q}(q)(t)|=\left|\frac{1}{\mu(t)}\left(\Phi[f](t)-\partial_{0+,t}^{\alpha} \mu(t)-\Phi[A^{\beta} u](t)\right)\right|   \leq \frac{1}{\mu_0}\Bigg(|\Phi[f](t)|+\left|\partial_{0+,t}^{\alpha} \mu(t)\right|
 \end{equation*}
\begin{equation*}
 +|\Phi[A^{\beta} u](t)|\Bigg)
 \leq \frac{1}{\mu_0}\left(\left|\sum_{k=1}^{\infty}\left(f(t), e_k\right) \Phi\left[e_k\right]\right|+\left\|\partial_{0+,t}^{\alpha} \mu\right\|_{C[0, T]} +\widetilde{C}\|u(t)\|_{D(A^{\beta})}\right)
\end{equation*}
\begin{equation*}
\leq \frac{1}{\mu_0}\left[\widetilde{C}\|f(t)\|_{D\left(A^\beta\right)}+\left\|\partial_{0+,t}^{\alpha} \mu\right\|_{C[0, T]}\right.
\end{equation*}
\begin{equation*}
\left.+\widetilde{C} E_{\alpha, 1}(\Gamma(\alpha ) T R)\left(\|\varphi\|_{D\left(A^{2\beta}\right)}+
\|\psi\|_{D\left(A^{2\beta}\right)}+\|f\|_{C\left([0,T];D\left(A^\beta\right)\right)}\right)\right].
\end{equation*}
Then we can choose sufficiently small $T_1$ such that
\begin{equation*} \frac{1}{\mu_0}\left[\widetilde{C}\|f(t)\|_{D\left(A^\beta\right)}+\left\|\partial_{0+,t}^{\alpha} \mu\right\|_{C[0, T]}\right.
\end{equation*}
\begin{equation*}
\left.+\widetilde{C} E_{\alpha, 1}(\Gamma(\alpha ) T R)\left(\|\varphi\|_{D\left(A^{2\beta}\right)}+
\|\psi\|_{D\left(A^{2\beta}\right)}+\|f\|_{C\left([0,T];D\left(A^{\beta}\right)\right)}\right)\right]\leq R
\end{equation*}
for all $T<T_1$ to obtain
\begin{equation}\label{equation41}
\|\mathcal{Q}(q)\|_{C[0,T]} \leq R.
\end{equation}
Now, we verify the second condition of the fixed point theorem. Let $q(t), \hat{q}(t) \in \mathrm{B}$ be given. Then for the difference of the operators we have
$$
\mathcal{Q}(q)(t)-\mathcal{Q}(\hat{q})(t)=-\frac{1}{\mu(t)}(\Phi[A^{\beta} u](t)-\Phi[A^{\beta} \hat{u}](t)).
$$
By the linearity of $\Phi[\cdot]$ and (I4), we obtain
$$
|\mathcal{Q}(q)(t)-\mathcal{Q}(\hat{q})(t)| \leq \frac{1}{\mu_0}|\Phi[A^{\beta}(u-\hat{u})](t)| \leq \widetilde{C} \frac{1}{\mu_0}\|u(t)-\hat{u}(t)\|_{D(A^{\beta})}.
$$

Then, by Theorem \ref{theorem55}, we have
\begin{equation*}
\|\mathcal{Q}(q)-\mathcal{Q}(\hat{q})\|_{C[0, T]} \leq  C \widetilde{C} \frac{1}{\mu_0}\|q-\hat{q}\|_{C[0, T]},
\end{equation*}
where $C$ is the same as (\ref{equation32}). Therefore, we can choose sufficiently small $T_2$ such that
\begin{equation*}
C(T) \widetilde{C}(T) \frac{1}{\mu_0}:=r<1
\end{equation*}
for all $T \in\left(0,T_2\right]$ to obtain
\begin{equation}\label{equation42}
\|\mathcal{Q}(q)-\mathcal{Q}(\hat{q})\|_{C[0, T]} \leq r\|q-\hat{q}\|_{C[0, T]}.
\end{equation}

Estimates (\ref{equation41}) and (\ref{equation42}) show that $\mathcal{Q}$ is a contraction map on $B_R$ for all $T \in(0,T_0]$, if we choose $T_0 \leq \min \left\{T_1, T_2\right\}$.

The theorem \ref{theorem555} is proved.
\end{proof}

Now we will prove the theorem about the stability of the solution of the inverse problem.

\begin{theorem}\label{theorem666}
Let conditions (I1)-(I4) be fulfilled and $u_i$ be the solution of (\ref{equation1})-(\ref{equation3}) for $q=q_i \in C[0, T]$ with $\|q\|_{C[0, T]} \leq R \, (i=1,2)$. Assume that there exists $\kappa>0$ such that
\begin{equation}\label{equation43}
\left|\Phi\left[u_2\right](t)\right| \geq \kappa^{-1}>0, \quad \text { for all } \quad t \in[0, T].
\end{equation}
Then there exists a constant $\tilde{C}>0$ depending on $R, T, \alpha$ and $\mu_0$ such that
$$
\tilde{C}^{-1}\left\|\partial_{0+,t}^{\alpha}\left(\Phi\left[u_1\right]-\Phi\left[u_2\right]\right)\right\|_{C[0, T]}
$$
\begin{equation}\label{equation44}
\leq\left\|q_1-q_2\right\|_{C[0, T]} \leq \tilde{C}\left\|\partial_{0+,t}^{\alpha}\left(\Phi\left[u_1\right]-\Phi\left[u_2\right]\right)\right\|_{C[0, T]}
\end{equation}
and
\begin{equation}\label{equation45}
\left\|u_1-u_2\right\|_{C([0, T] ; D(A^{\beta}))} \leq \tilde{C}\left\|q_1-q_2\right\|_{C[0, T]}.
\end{equation}
\end{theorem}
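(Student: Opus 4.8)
The plan is to establish the solution estimate (\ref{equation45}) first, because both inequalities in (\ref{equation44}) hinge on controlling $\|u_1-u_2\|_{C([0,T];D(A^{\beta}))}$ by $\|q_1-q_2\|_{C[0,T]}$. Once (\ref{equation45}) is in hand, the two-sided estimate (\ref{equation44}) follows by applying the bounded linear functional $\Phi$ to equation (\ref{equation1}) written for each $u_i$, subtracting, and isolating $q_1-q_2$ with the help of the nondegeneracy hypothesis (\ref{equation43}).

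To prove (\ref{equation45}), I would subtract the representation (\ref{equation31}) for $u_1$ and $u_2$. The terms $Z_1(t)\varphi$, $Z_2(t)\psi$ and $\mathcal{G}(f)(t)$ do not depend on $q$ and cancel, leaving $u_1-u_2=-\mathcal{G}(q_1)(u_1)+\mathcal{G}(q_2)(u_2)$, which I split as $-\mathcal{G}(q_1)(u_1-u_2)-\mathcal{G}(q_1-q_2)(u_2)$. Applying $A^{\beta}$ and the kernel bound (\ref{equation8}) gives
$$\|u_1(t)-u_2(t)\|_{D(A^{\beta})}\le CR\int_0^t(t-s)^{\alpha-1}\|u_1(s)-u_2(s)\|_{D(A^{\beta})}\,ds+C\|q_1-q_2\|_{C[0,T]}\int_0^t(t-s)^{\alpha-1}\|u_2(s)\|_{D(A^{\beta})}\,ds.$$
Bounding $\|u_2\|_{C([0,T];D(A^{\beta}))}$ by (\ref{equation30}) makes the second term $\le \tilde C T^{\alpha}\|q_1-q_2\|_{C[0,T]}$, and then Lemma \ref{lemma1} with $\gamma=1$ absorbs the first integral, yielding (\ref{equation45}).

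For (\ref{equation44}) I would use that, by (I4), $\Phi$ is bounded and linear, so it commutes with the Caputo derivative, i.e. $\partial_{0+,t}^{\alpha}\Phi[u_i]=\Phi[\partial_{0+,t}^{\alpha}u_i]$. Applying $\Phi$ to (\ref{equation1}) for each $u_i$ and subtracting yields the identity
$$\partial_{0+,t}^{\alpha}\big(\Phi[u_1]-\Phi[u_2]\big)+\Phi[A^{\beta}(u_1-u_2)]+q_1\big(\Phi[u_1]-\Phi[u_2]\big)+(q_1-q_2)\Phi[u_2]=0.$$
The left inequality of (\ref{equation44}) comes from bounding $|\partial_{0+,t}^{\alpha}(\Phi[u_1]-\Phi[u_2])|$ by the remaining three terms: the first through (\ref{equation40}) and (\ref{equation45}), the second through boundedness of $\Phi$, the bound $\|q_1\|_{C[0,T]}\le R$ and (\ref{equation45}), and the third by the uniform bound $\|\Phi[u_2]\|_{C[0,T]}\le M$ times $\|q_1-q_2\|_{C[0,T]}$, giving $\|\partial_{0+,t}^{\alpha}(\Phi[u_1]-\Phi[u_2])\|_{C[0,T]}\le\tilde C\|q_1-q_2\|_{C[0,T]}$.

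For the right inequality I would solve the identity for $(q_1-q_2)\Phi[u_2]$ and divide by $\Phi[u_2]$, which is legitimate by (\ref{equation43}), obtaining
$$|q_1-q_2|\le\kappa\Big(\big|\partial_{0+,t}^{\alpha}(\Phi[u_1]-\Phi[u_2])\big|+|\Phi[A^{\beta}(u_1-u_2)]|+R\,|\Phi[u_1-u_2]|\Big).$$
Estimating the last two terms via (\ref{equation40}), boundedness of $\Phi$ and (\ref{equation45}) produces $\|q_1-q_2\|_{C[0,T]}$ on the right with a constant carrying a factor $T^{\alpha}$. The main obstacle is exactly this self-reference: the same quantity $\|q_1-q_2\|_{C[0,T]}$ appears on both sides. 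I expect to resolve it by shrinking $T$ so that the coefficient $\kappa(\widetilde C+Rc)\tilde C_0<1$, which is consistent with the local nature of the result; absorbing then gives the right inequality of (\ref{equation44}) and completes the proof.
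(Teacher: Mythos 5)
Your overall strategy coincides with the paper's: you derive (\ref{equation45}) from the difference of the representations (which is the same as subtracting the two copies of (\ref{equation46})'s solution formula $u=\mathcal{G}(q)(u_2)-\mathcal{G}(q_1)(u)$), you arrive at the same identity (\ref{equation47}) after applying $\Phi$, and your treatment of the left inequality in (\ref{equation44}) matches the paper's. The one place where you genuinely diverge is the right inequality of (\ref{equation44}), and there your argument proves less than the theorem claims. After dividing by $\Phi[u_2]$ you replace $|\Phi[A^{\beta}(u_1-u_2)](t)|$ and $R\,|\Phi[(u_1-u_2)](t)|$ by the global quantity $\tilde C\|q_1-q_2\|_{C[0,T]}$ and then absorb it by shrinking $T$ until $\kappa(\widetilde C+Rc)\tilde C_0<1$. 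But Theorem \ref{theorem666} is stated for the given $T$, with the constant merely allowed to depend on $T$; a smallness restriction on $T$ is not part of the hypotheses, so the absorption step does not close the proof as stated. Taking the supremum too early is exactly what forces you into this corner: it destroys the convolution structure that would otherwise let you absorb the self-referencing term for free.

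The paper's fix, which you should adopt, is to keep the estimate pointwise in $t$. The intermediate bound behind (\ref{equation45}) reads
\begin{equation*}
\|u_1(t)-u_2(t)\|_{D(A^{\beta})}\leq C\,Z_{\alpha,1}(C\Gamma(\alpha)t)\int_0^t(t-s)^{\alpha-1}|q_1(s)-q_2(s)|\,ds,
\end{equation*}
so via (\ref{equation40}) the terms $|\Phi[A^{\beta}(u_1-u_2)](t)|$ and $R\,|\Phi[(u_1-u_2)](t)|$ are each dominated by $C\int_0^t(t-s)^{\alpha-1}|q_1(s)-q_2(s)|\,ds$. Your pointwise inequality then becomes
\begin{equation*}
|q_1(t)-q_2(t)|\leq\kappa\bigl\|\partial_{0+,t}^{\alpha}\bigl(\Phi[u_1]-\Phi[u_2]\bigr)\bigr\|_{C[0,T]}+C\kappa\int_0^t(t-s)^{\alpha-1}|q_1(s)-q_2(s)|\,ds,
\end{equation*}
to which the singular Gr\"onwall inequality of Lemma \ref{lemma1} (with $\gamma=1$) applies directly and yields $\|q_1-q_2\|_{C[0,T]}\leq C\|\partial_{0+,t}^{\alpha}(\Phi[u_1]-\Phi[u_2])\|_{C[0,T]}$ for the given, arbitrary $T$. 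With that replacement the rest of your argument is sound.
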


\begin{proof}
We assume that $u_i$ are two solutions to (\ref{equation1})-(\ref{equation2}) corresponding to $q=q_i(i=1,2)$. Let $u=u_1-u_2$ and $q=q_2-q_1$. Then $u$ satisfies
\begin{equation}\label{equation46}
\left\{\begin{array}{l}
\partial_{0+,t}^{\alpha} u(t)+A^{\beta} u(t)+q_1(t) u(t)=q(t) u_2(t), \quad t \in(0, T], \\
u(0)=0,\,\, u_t(0)=0.
\end{array}\right.
\end{equation}

According to Theorem \ref{theorem44}, $u(t)$ is given by
$$
u(t)=\mathcal{G}(q)\left(u_2\right)-\mathcal{G}\left(q_1\right)(u).
$$

Now, we get the upper estimate for $\|u(t)\|_{D(A)}$. Similar to the argument of (\ref{equation24}), we have
$$
\|u(t)\|_{D(A^{\beta})} \leq C\left[\int_0^t(t-s)^{\alpha -1}\|u(s)\|_{D(A^{\beta})} d s+\int_0^t(t-s)^{\alpha -1}|q(s)| d s\right],
$$
where $C>0$ are depend on $\alpha,$ $T$, $\|f\|_{C\left([0, T] ; D\left(A^{\beta}\right)\right)}$, $\|\varphi\|_{D\left(A^{2\beta}\right)}$,  $\|\psi\|_{D\left(A^{2\beta}\right)}$, and $\left\|q_1\right\|_{C[0, T]}$. Then, according to Lemma \ref{lemma1}, we have
$$
\|u(t)\|_{D(A^{\beta})} \leq C Z_{\alpha, 1}(C \Gamma(\alpha) t) \int_0^t(t-s)^{\alpha-1}|q(s)| d s
$$
or
$$
\|u\|_{C([0, T] ; D(A^{\beta}))} \leq C\frac{T^{\alpha }}{\alpha } Z_{\alpha, 1}(C \Gamma(\alpha) T)\|q\|_{C[0, T]}.
$$

That is, (\ref{equation45}) is true.\\
Applying $\Phi[\cdot]$ in equation (\ref{equation46}), the procedure yields
\begin{equation}\label{equation47}
\Phi\left[u_2\right](t) q(t)=\partial_{0+,t}^{\alpha} \Phi[u](t)+\Phi[A^{\beta} u](t)+q_1(t) \Phi[u](t), \quad t \in(0, T).
\end{equation}

By performing calculations like those in equations (\ref{equation30}) and  (\ref{equation31}), and by (\ref{equation43}), we can obtain
$$
|q(t)| \leq \kappa\left|\partial_{0+,t}^{\alpha} \Phi[u](t)+\Phi[A^{\beta} u](t)+q_1(t) \Phi[u](t)\right|
$$
$$
\leq \kappa\left\|\partial_{0+,t}^{\alpha} \Phi[u]\right\|_{C[0, T]}+C \kappa \int_0^t(t-s)^{\alpha -1}|q(s)| d s, \quad t \in(0,T].
$$
Again, using Lemma \ref{lemma1}, we obtain:
$$
\|q\|_{C[0, T]} \leq C\left\|\partial_{0+,t}^{\alpha} \Phi[u]\right\|_{C[0, T]}
$$
and from this, we obtain the right hand side of (\ref{equation44}). On the other hand, from (\ref{equation47}), we obtain
\begin{equation*}
\left|\partial_{0+,t}^{\alpha} \Phi[u](t)\right| \leq\left|\Phi\left[u_2\right](t) q(t)\right|+|\Phi[A^{\beta} u](t)|+\left|q_1(t) \Phi[u](t)\right|
\end{equation*}
\begin{equation*}
\leq C|q(t)|\left\|u_2(t)\right\|_{D(A^{\beta})}+C \int_0^t(t-s)^{\alpha  -1}|q(s)| d s
\end{equation*}
\begin{equation*}
 \leq C|q(t)|\left\|u_2\right\|_{C([0, T] ; D(^{\beta}))}+C \frac{T^{\alpha }}{\alpha }\|q\|_{C[0, T]}.
\end{equation*}
Therefore, we get
$$
\left\|\partial_{0+,t}^{\alpha} \Phi[u]\right\|_{C[0, T]} \leq C\left(\left\|u_2\right\|_{C([0, T] ; D(A^{\beta}))}+T^{\alpha }\right)\|q\|_{C[0, T]}.
$$
The theorem \ref{theorem666} is proved.
\end{proof}

\section*{Conclusion}
In this article, we studied the inverse problem of determining the unknown coefficient in the fractional-order abstract diffusion-wave equation. We initially analyzed the correctness of the direct problem.  Then we determined the relationship between the equivalence between the inverse problem and a corresponding integral equation. Using the properties of the solution of the direct problem, we considered the inverse problem in more detail.   Using the theorem a fixed point in Banach space, we obtained the local existence, uniqueness, and stability of the solution of the inverse problem.

\end{document}